\documentclass[11pt]{article}
\usepackage[margin=1in]{geometry}
\usepackage{amsmath,amssymb,amsthm,mathtools}
\usepackage{booktabs}
\usepackage{graphicx}
\usepackage{tikz}
\usepackage{microtype}
\usepackage{xcolor}
\usepackage{hyperref}
\usepackage{enumitem}
\hypersetup{
  colorlinks=true,
  linkcolor=blue!55!black,
  citecolor=blue!55!black,
  urlcolor=blue!55!black,
  pdfauthor={Yi Du},
  pdftitle={When a Relaxed PEP Is Exact: The Sharp Queried-Gradient Rate of Nesterov's Fast Gradient Method}
}

\newtheorem{theorem}{Theorem}[section]
\newtheorem{lemma}[theorem]{Lemma}
\newtheorem{proposition}[theorem]{Proposition}

\theoremstyle{definition}

\newtheorem{remark}[theorem]{Remark}

\newcommand{\R}{\mathbb{R}}
\newcommand{\F}{\mathcal{F}}
\newcommand{\conv}{\operatorname{conv}}
\newcommand{\dist}{\operatorname{dist}}
\newcommand{\Proj}{\operatorname{Proj}}
\newcommand{\one}{\boldsymbol{1}}
\newcommand{\ip}[2]{\left\langle #1,#2\right\rangle}
\newcommand{\norm}[1]{\left\lVert #1\right\rVert}

\title{When a Relaxed PEP Is Exact:\\
The Sharp Queried-Gradient Rate of Nesterov's Fast Gradient Method}
\author{Yi Du}
\date{Revised 25 August 2026}

\begin{document}
\maketitle

\begin{abstract}
We determine the exact worst-case value, at every horizon $N\geq7$, of the smallest queried gradient norm generated by Nesterov's fast gradient method on smooth convex functions.  Let $t_0=1$ and $t_{k+1}=(1+\sqrt{1+4t_k^2})/2$, and let $x_0,\ldots,x_N$ denote the points at which the method evaluates gradients.  For every such $N$ and every dimension $d\geq N-4$, we prove
\[
 \sup_{\substack{f\in\F_{0,L}(\R^d),\ x_\star\in\arg\min f\\
                  \norm{x_0-x_\star}\leq R}}
 \min_{0\leq k\leq N}\norm{\nabla f(x_k)}^2
 =\frac{L^2R^2}{\sum_{k=0}^N t_k^2}.
\]
The relaxed-PEP upper bound is due to Kim and Fessler, who also reported tight numerical solutions of the exact-interpolation PEP at selected horizons.  What remained missing was an analytic matching family valid uniformly over the horizon.  For every $N\geq7$, we construct such a family using an FGM-specific spherical polytope $K_N$ and the standard projection-envelope function
\[
 f_N(x)=\max_{g\in K_N}\left\{\ip{x}{g}-\frac12\norm{g}^2\right\},
 \qquad \nabla f_N(x)=\Proj_{K_N}(x).
\]
Every queried gradient has the same norm, and the vertices of $K_N$ are generated from a three-dimensional seed by a one-dimensional spherical cone lift.  The lift preserves all projection inequalities and raises the adversary dimension by one at each horizon.  The projection/Moreau-envelope template itself is classical; the new ingredients are the FGM-specific algebraic seed, the proof that it attains the relaxed bound, and the common-latitude lift that propagates this exactness to every $N\geq7$.  We state precise hypotheses for that propagation and do not claim that every rank-one relaxed PEP admits such a seed.
\end{abstract}

\noindent\textbf{Keywords.} performance estimation problem; Nesterov acceleration; fast gradient method; exact worst-case analysis; gradient norm; Moreau envelope.

\section{Introduction}

Performance estimation problems (PEPs) turn worst-case analysis of first-order methods into finite-dimensional semidefinite programs.  Drori and Teboulle~\cite{DroriTeboulle2014} introduced the approach for smooth convex minimization.  Taylor, Hendrickx, and Glineur~\cite{TaylorHendrickxGlineur2017} subsequently proved necessary and sufficient smooth strongly convex interpolation conditions, making the corresponding finite-horizon SDP exact rather than merely a relaxation.

One of the first numerical surprises produced by this framework concerns the gradient norms of Nesterov's fast gradient method (FGM).  Although FGM was designed for objective decrease, numerical solutions of the exact-interpolation PEP suggested an $O(N^{-3/2})$ rate for the best gradient norm among its queried iterates~\cite[Section~4.3]{TaylorHendrickxGlineur2017}.  Kim and Fessler~\cite[Section~5.2]{KimFessler2018} later proved the chain of analytical bounds
\begin{equation}\label{eq:KF-upper-intro}
 \min_{0\leq k\leq N+1}\norm{\nabla f(y_k)}
 \leq \min_{0\leq k\leq N}\norm{\nabla f(x_k)}
 \leq \frac{LR}{\sqrt{S_N}}
 \leq \frac{2\sqrt{3}\,LR}{\sqrt{(N+1)(N^2+6N+12)}},
 \qquad S_N:=\sum_{k=0}^N t_k^2.
\end{equation}
Their proof uses a deliberately reduced set of interpolation inequalities.  Thus, it establishes an upper bound but does not imply that the bound can be attained by a smooth convex function.  Kim and Fessler also reported dimension-unrestricted tight numerical PEP values for the same queried-gradient criterion~\cite[Section~6.2 and Table~3]{KimFessler2018}.  For the tested moderate horizons, the FGM column nearly coincides with the reciprocal of~\eqref{eq:KF-upper-intro} to the displayed precision.  The gap left open was therefore analytical rather than numerical: no global matching function or all-horizon exactness proof was supplied.

PEPit~\cite{GoujaudEtAl2024} has made such finite-horizon experiments reproducible, but a numerical optimum and a floating-point dual certificate do not by themselves give an all-horizon theorem.  Recent work illustrates both sides of that distinction.  Rotaru, Glineur, and Patrinos~\cite{RotaruGlineurPatrinos2026} obtained exact best-gradient rates for gradient descent over the full constant-stepsize range; their algorithmic setting is different from FGM.  Thomsen et al.~\cite{ThomsenEtAl2026} optimize the sparsity and structure of PEP certificates, emphasizing that certificate simplification is a second mathematical problem.  For Nesterov acceleration itself, Jang and Ryu~\cite{JangRyu2026} recently established point convergence, a qualitative infinite-horizon statement rather than the finite-horizon minimax constant studied here.

The lower-bound function family used below must be distinguished from the new rate result.  If $K=aB_2$ is a Euclidean ball, then
\[
 \phi_K(x)=\begin{cases}
 \tfrac12\norm{x}^2,&\norm{x}\leq a,\\
 a\norm{x}-\tfrac12a^2,&\norm{x}\geq a,
 \end{cases}
\]
the radial Huber function that already appears in classical PEP worst-case constructions, including Drori and Teboulle~\cite{DroriTeboulle2014}.  Drori and Taylor~\cite[Appendix~A]{DroriTaylor2022} subsequently used a max-over-simplex smooth extension in oracle-complexity lower bounds; their specific non-strongly-convex lower-bound instance can be rewritten, after translation and scaling, as a polyhedral member of the same max-quadratic family.  Most directly, Ma and Chen~\cite{MaChen2026}, posted on arXiv on 11 August 2026, use $K=\conv\{0,g_0,\ldots,g_m\}$, the Moreau envelope of its support function, the identity $\nabla\phi_K=\Proj_K$, and normal-cone tests to construct lower bounds for stepsize-based gradient descent.  We therefore make no novelty claim for the projection-envelope template, its normal-cone verification, or the use of new orthogonal coordinates in a hard instance.

Dimension growth and horizon recursion also have substantial precedents.  Zero-chain and resisting-oracle lower bounds enlarge the hidden orthogonal subspace as information is revealed~\cite[Section~2.2]{DroriTaylor2022}, while Altschuler and Parrilo~\cite{AltschulerParrilo2025} recursively glue constraints from different trajectory segments on the upper-bound/certificate side.  These constructions do not, to our knowledge, give the specific primal transformation used here: place all vertices of an attained projection-polytope adversary at one common latitude, add one apex, and preserve every pre-existing projection inequality while extending horizon $N$ to $N+1$.  Proposition~\ref{prop:conditional-propagation} formulates only this narrower statement.

This paper closes that analytical gap from horizon seven onward.  To the best of our knowledge, the contribution is the first proof that the Kim--Fessler queried-gradient bound is attained for every $N\geq7$, together with a globally defined matching function at each horizon.  It is not the first numerical PEP evidence for this equality and not the first use of a projection-envelope adversary.  The construction was guided by three features of high-accuracy primal PEP solutions: the queried gradient norms nearly equalize; the numerical gradient Gram matrix has rank $N-4$ with a clear spectral gap; and the first five gradients form a tight cluster.  The last signal is materially weaker than the first two: in a representative $N=7$ solve with a duality gap of order $10^{-8}$, their largest pairwise distance was $1.9\times10^{-4}$, about $0.2\%$ of the common gradient norm.  We therefore treat $g_0=\cdots=g_4$ as a numerically suggested ansatz, not as a consequence of optimality.

Our argument has three components.
\begin{enumerate}[leftmargin=2em]
\item Inspection of the Kim--Fessler dual certificate suggests a rank-one slack pattern; we adopt it as an ansatz to identify the balance condition $x_0-x_\star=L^{-1}\sum_{k=0}^N t_k\nabla f(x_k)$, which the construction then verifies directly.
\item We build a four-vertex, rank-three spherical seed at $N=7$.  Its exact entries are the solution of six linear equations in the algebraic numbers $t_0,\ldots,t_7$.  A short outward-rounded interval certificate verifies positive semidefiniteness and every omitted interpolation inequality.
\item We prove a spherical lifting lemma.  A common tilt of all existing unit gradients, followed by one new antipodal direction, preserves the old projection inequalities and makes all new ones automatic.  FGM's coefficient identities give exactly the tilt needed to preserve both the terminal balance and the initial radius.
\end{enumerate}

The resulting adversary is the conjugate of a strongly convex quadratic restricted to a polytope.  Equivalently, it is a smooth piecewise-quadratic Moreau envelope.  The novelty lies in selecting and propagating the FGM-compatible polytopes, not in this standard representation of the function.

\paragraph{Scope.} Our theorem concerns the points $x_k$ at which FGM actually evaluates $\nabla f(x_k)$.  It does not claim the same exact constant for the post-gradient points $y_{k+1}$.  In fact, the adversary constructed below satisfies $y_{N+1}=x_\star$ exactly (Remark~\ref{rem:post-gradient-hit}).  Proposition~\ref{prop:N1} settles $N=1$ separately; the analytical description of the five finite branches $2\leq N\leq6$ remains open.

\section{Problem statement and the known upper bound}

Let $\F_{0,L}(\R^d)$ be the class of convex differentiable functions whose gradients are $L$-Lipschitz.  FGM is written in the following form:
\begin{align}
 y_0&=x_0, \qquad t_0=1,\label{eq:fgm-init}\\
 y_{k+1}&=x_k-\frac1L\nabla f(x_k),\label{eq:fgm-y}\\
 t_{k+1}&=\frac{1+\sqrt{1+4t_k^2}}2,\label{eq:t-rec}\\
 x_{k+1}&=y_{k+1}+\frac{t_k-1}{t_{k+1}}(y_{k+1}-y_k).\label{eq:fgm-x}
\end{align}
We call $N$ the horizon: FGM performs $N$ update steps and queries gradients at the $N+1$ points $x_0,\ldots,x_N$.
The identities
\begin{equation}\label{eq:t-identities}
 t_k^2=t_{k-1}^2+t_k=\sum_{j=0}^k t_j
\end{equation}
follow directly from~\eqref{eq:t-rec}.

For a horizon $N$, define
\begin{equation}\label{eq:WN}
 W_N(L,R,d):=\sup_{\substack{f\in\F_{0,L}(\R^d),\ x_\star\in\arg\min f\\
                        \norm{x_0-x_\star}\leq R}}
                  \min_{0\leq k\leq N}\norm{\nabla f(x_k)}^2.
\end{equation}

Kim and Fessler's analytical dual certificate gives the following dimension-free upper bound.

\begin{theorem}[Kim--Fessler gradient bound]\label{thm:upper}
The following chain is Theorem~5.2, equation~(5.7), in Section~5.2 of the published version of~\cite{KimFessler2018}; the same display appears as equation~(5.7) in arXiv version~4.  Fix $N\geq1$, $L>0$, $R\geq0$, and $d\geq1$.  Let $f\in\F_{0,L}(\R^d)$ and $x_\star\in\arg\min f$ satisfy $\norm{x_0-x_\star}\leq R$, and let $x_0,\ldots,x_N$ and $y_0,\ldots,y_{N+1}$ be generated by~\eqref{eq:fgm-init}--\eqref{eq:fgm-x}, with~\eqref{eq:fgm-y} also applied at $k=N$ to define $y_{N+1}$.  Then
\begin{equation}\label{eq:upper}
 \min_{0\leq i\leq N+1}\norm{\nabla f(y_i)}
 \leq \min_{0\leq i\leq N}\norm{\nabla f(x_i)}
 \leq \frac{LR}{\sqrt{S_N}},
 \qquad S_N:=\sum_{k=0}^N t_k^2.
\end{equation}
Consequently,
$W_N(L,R,d)\leq L^2R^2/S_N$.
\end{theorem}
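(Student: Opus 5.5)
The two inequalities in~\eqref{eq:upper} have different sources, and I would prove them separately; the final claim $W_N\leq L^2R^2/S_N$ then follows by squaring and taking the supremum. Throughout, write $g_k:=\nabla f(x_k)$. Since the statement is a citation of~\cite[Theorem~5.2]{KimFessler2018}, the plan below is a self-contained reconstruction. As a fallback at the hard step, one can simply check the published dual multipliers.

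\emph{First inequality.} It suffices to show that one gradient step never increases the gradient norm: for $y=x-\frac1L\nabla f(x)$ we want $\norm{\nabla f(y)}\leq\norm{\nabla f(x)}$. Put $g=\nabla f(x)$ and $h=\nabla f(y)$, so that $y-x=-g/L$. Cocoercivity for $f\in\F_{0,L}$ gives
\[
\norm{h-g}^2\leq L\ip{h-g}{y-x}=-\ip{h-g}{g}.
\]
Expanding the left side, this reads $\norm{h}^2-2\ip{h}{g}+\norm{g}^2\leq-\ip{h}{g}+\norm{g}^2$. Hence $\norm{h}^2\leq\ip{h}{g}\leq\norm{h}\norm{g}$, which is the claim. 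Applying this with $y=y_{i+1}$ and $x=x_i$ for $i=0,\ldots,N$ gives $\min_{1\leq i\leq N+1}\norm{\nabla f(y_i)}\leq\min_{0\leq i\leq N}\norm{g_i}$, which is stronger than the first inequality.

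\emph{Second inequality.} Introduce the auxiliary sequence $z_0=x_0$ and $z_{k+1}=z_k-\frac{t_k}{L}g_k$. From~\eqref{eq:fgm-x} and~\eqref{eq:t-identities} one checks by induction the standard relations $x_k=(1-t_k^{-1})y_k+t_k^{-1}z_k$. In particular $z_{N+1}=x_0-\frac1L\sum_k t_kg_k$; this is precisely the balance point mentioned in the introduction.

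The target is the quadratic inequality
\[
\norm{z_{N+1}-x_\star}^2+\frac1{L^2}\sum_{k=0}^N t_k^2\norm{g_k}^2\leq\norm{x_0-x_\star}^2 .
\]
This target suffices: dropping the first term and bounding each $\norm{g_k}^2$ below by the minimum gives $S_N\min_k\norm{g_k}^2\leq L^2R^2$. Expanding each step of the $z$-recursion,
\[
\norm{z_{k+1}-x_\star}^2=\norm{z_k-x_\star}^2-\frac{2t_k}{L}\ip{g_k}{z_k-x_\star}+\frac{t_k^2}{L^2}\norm{g_k}^2 .
\]
After telescoping, the target is therefore equivalent to
\[
\sum_k t_k\ip{g_k}{z_k-x_\star}\geq\frac1L\sum_k t_k^2\norm{g_k}^2 .
\]

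To prove this last inequality, write $z_k-x_\star=t_k(x_k-x_\star)-(t_k-1)(y_k-x_\star)$. Then lower-bound the two resulting inner products using interpolation inequalities:
\begin{itemize}[leftmargin=2em]
\item between $x_k$ and $x_\star$, namely $\ip{g_k}{x_k-x_\star}\geq f(x_k)-f_\star+\frac1{2L}\norm{g_k}^2$;
\item between $x_k$ and $y_k$, or more sharply between $x_k$ and $x_{k-1}$, keeping the gradient-difference term $\frac1{2L}\norm{g_k-g_{k-1}}^2$;
\item the descent relation $f(y_k)\leq f(x_{k-1})-\frac1{2L}\norm{g_{k-1}}^2$.
\end{itemize}
Weighting the $k$-th group by $t_k^2$ and $t_k^2-t_k=t_{k-1}^2$, the function values should telescope.

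\emph{Main obstacle.} The hard step is obtaining exactly the coefficient $t_k^2/L$ on $\norm{g_k}^2$. The naive choice of inequalities above yields only about half of it. The extra $\norm{g_k}^2$ mass must come from the cross terms $\ip{g_k}{g_{k-1}}$ hidden in $\norm{g_k-g_{k-1}}^2$, combined with the leftover nonnegative quantity $f(x_N)-f_\star$.

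I would first try the following. Use only the inequalities for the pairs $(x_{k-1},x_k)$ and $(x_\star,x_k)$, with multipliers that are linear combinations of $t_k^2$ and $t_{k-1}^2$, as in the rank-one slack ansatz from the introduction. Then verify, with the identities~\eqref{eq:t-identities}, that the residual is $-\norm{\sum(\text{coefficients})\,g_k}^2\leq0$, i.e.\ a perfect square. If the algebra resists, I would fall back on reading off the multipliers of Kim and Fessler's published certificate and checking that their weighted sum is an identity.
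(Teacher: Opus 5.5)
Your proof of the first inequality is correct and complete. Your reduction of the second inequality is also sound: the target $\norm{z_{N+1}-x_\star}^2+L^{-2}\sum_k t_k^2\norm{g_k}^2\le\norm{x_0-x_\star}^2$ is exactly the rank-one Kim--Fessler certificate. The paper only cites this result; Lemma~\ref{lem:radius-identity} displays its multipliers, $t_{i-1}^2$ on the pairs $(x_{i-1},x_i)$ and $t_N^2$ on the pair $(x_N,x_\star)$. The genuine gap is the step you call the main obstacle, and the way you propose to close it would fail. The inequalities for the pairs $(x_{k-1},x_k)$ and $(x_\star,x_k)$ never contain $-f_\star$, so on their own they cannot eliminate $f_\star$. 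Adding only $f(x_N)-f_\star\ge0$ does not reach the constant $S_N$ either. For $N=1$, $L=R=1$, $d=1$, take $x_\star=0$, $f_\star=0$, $x_0=1$, $g_0=g_1=2/3$, $x_1=1/3$, $f_0=4/9$, $f_1=0$. Every interpolation inequality among $x_0,x_1,x_\star$ holds except $f(x_1)\ge f_\star+\frac12\norm{g_1}^2$, and so do your descent relation and $f(x_1)\ge f_\star$. Yet $\min_k\norm{g_k}^2=4/9>1/S_1\approx0.276$. The missing ingredient is the full terminal inequality $f(x_N)-f_\star\ge\frac1{2L}\norm{g_N}^2$, used with weight $t_N^2$. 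Equivalently, you can use the descent lemma at $k=N+1$ together with $f(y_{N+1})\ge f_\star$.

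With that inequality there is no leftover square to find, because the weighted sum is an exact identity. For $k\ge1$ write $t_k\ip{g_k}{z_k-x_\star}=t_k\ip{g_k}{x_k-x_\star}+t_{k-1}^2\ip{g_k}{x_k-y_k}$ and $x_k-y_k=(x_k-x_{k-1})+\frac1Lg_{k-1}$. The $(x_{k-1},x_k)$ inequality then gives
\[
 \ip{g_k}{x_k-y_k}\ge f(x_k)-f(x_{k-1})+\frac1{2L}\bigl(\norm{g_k}^2+\norm{g_{k-1}}^2\bigr).
\]
This holds because the cross term of $\norm{g_k-g_{k-1}}^2$ cancels $\frac1L\ip{g_k}{g_{k-1}}$ exactly. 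Add $t_k$ times the $(x_\star,x_k)$ inequality and sum, using $t_k+t_{k-1}^2=t_k^2$ and $\sum_{k=0}^Nt_k=t_N^2$:
\[
 \sum_{k=0}^N t_k\ip{g_k}{z_k-x_\star}\ge t_N^2\bigl(f(x_N)-f_\star\bigr)+\frac1L\sum_{k=0}^N t_k^2\norm{g_k}^2-\frac{t_N^2}{2L}\norm{g_N}^2 .
\]
The terminal inequality cancels exactly the deficit $\frac{t_N^2}{2L}\norm{g_N}^2$. With only $f(x_N)\ge f_\star$, this weighting yields $S_{N-1}$ in place of $S_N$.
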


Inspection of the feasible dual point constructed in the proof of Theorem~\ref{thm:upper} (equations~(5.5)--(5.6) of~\cite{KimFessler2018}) suggests that the associated PSD slack is rank one, proportional to $\tau\tau^\top$ with $\tau=(t_0,\ldots,t_N,1)$.  Under that reading, complementary slackness predicts, in the normalized case $L=R=1$, the primal equality
\begin{equation}\label{eq:rank-one-balance}
 x_0-x_\star=\sum_{k=0}^N t_k g_k,
 \qquad g_k:=\nabla f(x_k).
\end{equation}
The positive multipliers attached to the performance constraints likewise predict $\norm{g_0}=\cdots=\norm{g_N}=S_N^{-1/2}$.  We emphasize that~\eqref{eq:rank-one-balance} and the equal-norm pattern enter only as an ansatz: no proof below relies on either claim about the structure of the dual certificate.  Our construction realizes both conditions while also satisfying every smooth-convex interpolation inequality omitted by the relaxed PEP.

We will also use the following exact identity associated with that rank-one pattern; it is an unconditional statement and does not depend on the dual-certificate reading above.  Besides removing any numerical ambiguity from the radius calculation at the seed, it exposes why the active faces below are the right ones.

\begin{lemma}[Rank-one radius identity]\label{lem:radius-identity}
Fix unit vectors $u_0,\ldots,u_N$, set $x_0=\sum_{k=0}^Nt_ku_k$, and generate formal FGM points with $L=1$ and prescribed gradients $u_k$.  If $z_k:=x_k-u_k$, then
\begin{equation}\label{eq:radius-identity}
 \frac{\norm{x_0}^2}{S_N}-1
 =\frac{2}{S_N}\sum_{i=1}^Nt_{i-1}^2
       \ip{z_{i-1}}{u_{i-1}-u_i}
  +\frac{2t_N^2}{S_N}\ip{z_N}{u_N}.
\end{equation}
In particular, vanishing adjacent and terminal inner products imply $\norm{x_0}^2=S_N$.
\end{lemma}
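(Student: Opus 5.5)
The identity is purely algebraic: no function is involved, and every inner product appears linearly. I would prove it by a telescoping argument built on the standard auxiliary ("momentum") sequence of FGM. With $L=1$ and prescribed gradients $u_k$, the post-gradient points are $y_{k+1}=x_k-u_k=z_k$. Define
\[
 w_0:=x_0,\qquad w_{k+1}:=w_k-t_ku_k\quad(0\le k\le N).
\]
The hypothesis $x_0=\sum_{k=0}^Nt_ku_k$ says exactly that $w_{N+1}=0$.

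The first step is to check the usual reformulation of~\eqref{eq:fgm-y}--\eqref{eq:fgm-x}:
\[
 t_kx_k=(t_k-1)y_k+w_k\qquad(0\le k\le N).
\]
Using $t_k(t_k-1)=t_{k-1}^2$ from~\eqref{eq:t-identities}, and the convention $t_{-1}:=0$, this is equivalent to
\[
 t_kw_k=t_k^2x_k-t_{k-1}^2y_k .
\]
At $k=0$ it holds because $t_0=1$ and $w_0=x_0=y_0$. For the inductive step, substitute $y_{k+1}=x_k-u_k$ and the FGM extrapolation~\eqref{eq:fgm-x} into the claimed relation at $k+1$. After multiplying through by $t_{k+1}$, one uses $t_{k+1}^2-t_{k+1}=t_k^2$ to reduce it to the relation at $k$ minus $t_ku_k$. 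I expect this induction to be the only place that needs care, mainly bookkeeping of the coefficients $(t_k-1)/t_{k+1}$. I would write it out explicitly, since everything else is mechanical.

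Next, telescope the squared norms:
\[
 \norm{x_0}^2=\norm{w_0}^2-\norm{w_{N+1}}^2=\sum_{k=0}^N\bigl(\norm{w_k}^2-\norm{w_k-t_ku_k}^2\bigr)
 =\sum_{k=0}^N\bigl(2t_k\ip{w_k}{u_k}-t_k^2\bigr),
\]
where the last equality uses $\norm{u_k}=1$. Since $\sum_kt_k^2=S_N$, this gives
\[
 \norm{x_0}^2-S_N=2\sum_{k=0}^N\bigl(t_k\ip{w_k}{u_k}-t_k^2\bigr).
\]
Substituting $t_kw_k=t_k^2x_k-t_{k-1}^2y_k$ and using $\ip{u_k}{u_k}=1$ again, each summand becomes
\[
 t_k^2\ip{x_k-u_k}{u_k}-t_{k-1}^2\ip{y_k}{u_k}
 =t_k^2\ip{z_k}{u_k}-t_{k-1}^2\ip{z_{k-1}}{u_k}.
\]
Here $y_k=z_{k-1}$ for $k\ge1$, and the $k=0$ term has coefficient $t_{-1}^2=0$.

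Finally, sum by parts over $k=0,\ldots,N$. Pairing $t_{i-1}^2\ip{z_{i-1}}{u_{i-1}}$, which comes from index $i-1$, with $-t_{i-1}^2\ip{z_{i-1}}{u_i}$, which comes from index $i$, gives $\sum_{i=1}^Nt_{i-1}^2\ip{z_{i-1}}{u_{i-1}-u_i}$. The unmatched boundary term is $t_N^2\ip{z_N}{u_N}$. Dividing by $S_N$ yields~\eqref{eq:radius-identity}. The "in particular" statement is then immediate: if all adjacent inner products $\ip{z_{i-1}}{u_{i-1}-u_i}$ and the terminal inner product $\ip{z_N}{u_N}$ vanish, the right-hand side is zero, so $\norm{x_0}^2=S_N$.
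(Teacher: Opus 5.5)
Your proof is correct and follows essentially the same route as the paper. Your sequence $w_{k+1}=x_0-\sum_{j\le k}t_ju_j$ is the paper's $\eta_k=t_kz_k-(t_k-1)z_{k-1}$ shifted by one index, and your telescoping of squared norms and summation by parts via $t_k(t_k-1)=t_{k-1}^2$ match the paper's steps; the only cosmetic difference is that you verify $t_kx_k=(t_k-1)y_k+w_k$ by induction, whereas the paper derives $\eta_i=\eta_{i-1}-t_iu_i$ directly from the recursion for the $z_i$.
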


\begin{proof}
Put $z_{-1}:=y_0=x_0$ and, for $0\leq i\leq N$, define
\[
 \eta_i:=t_i z_i-(t_i-1)z_{i-1},
 \qquad \eta_{-1}:=x_0.
\]
Since $z_i=y_{i+1}=x_i-u_i$, the FGM recursion gives, for $i\geq1$,
\[
 z_i=z_{i-1}+\frac{t_{i-1}-1}{t_i}(z_{i-1}-z_{i-2})-u_i.
\]
Consequently $\eta_i=\eta_{i-1}-t_i u_i$ for $0\leq i\leq N$; the case
$i=0$ follows from $t_0=1$ and $z_0=x_0-u_0$.  Hence
\begin{equation}\label{eq:eta-balance}
 \eta_i=x_0-\sum_{k=0}^i t_k u_k.
\end{equation}
The assumed balance gives $\eta_N=0$.  Using
$\eta_{i-1}=\eta_i+t_i u_i$ and telescoping squared norms yields
\begin{align*}
 \norm{x_0}^2
 &=\sum_{i=0}^N\bigl(\norm{\eta_{i-1}}^2-\norm{\eta_i}^2\bigr)\\
 &=2\sum_{i=0}^N t_i\ip{\eta_i}{u_i}
   +\sum_{i=0}^N t_i^2\norm{u_i}^2.
\end{align*}
Because the $u_i$ are unit vectors,
\begin{equation}\label{eq:eta-norm-gap}
 \frac12\bigl(\norm{x_0}^2-S_N\bigr)
 =\sum_{i=0}^N t_i\ip{\eta_i}{u_i}.
\end{equation}
Finally, the definition of $\eta_i$ and
$t_i(t_i-1)=t_{i-1}^2$ give
\begin{align*}
 \sum_{i=0}^N t_i\ip{\eta_i}{u_i}
 &=\sum_{i=0}^N t_i^2\ip{z_i}{u_i}
   -\sum_{i=1}^N t_i(t_i-1)\ip{z_{i-1}}{u_i}\\
 &=\sum_{i=1}^N t_{i-1}^2
      \ip{z_{i-1}}{u_{i-1}-u_i}
   +t_N^2\ip{z_N}{u_N}.
\end{align*}
Combining this equality with~\eqref{eq:eta-norm-gap} and multiplying by
$2/S_N$ proves~\eqref{eq:radius-identity}.

For connection with the PEP certificate, set
$x_\star=u_\star=f_\star=0$, $f_i=\ip{u_i}{x_i}-1/2$, and
\[
 Q_{ij}:=f_i-f_j-\ip{u_j}{x_i-x_j}
              -\frac12\norm{u_i-u_j}^2.
\]
Then $Q_{\star,i}=0$,
$Q_{i-1,i}=\ip{z_{i-1}}{u_{i-1}-u_i}$, and
$Q_{N,\star}=\ip{z_N}{u_N}$.  Thus the calculation above is precisely the
specialized gap identity with multipliers $t_{i-1}^2$ and $t_N^2$; in
particular, the normalized terminal coefficient is $2t_N^2/S_N$.
\end{proof}

\section{The standard projection-envelope template}

We first record, for a self-contained proof, the standard function class used for the lower bound.

\begin{lemma}[Standard projection-envelope interpolant]\label{lem:projection-interpolant}
Let $K\subset\R^d$ be a nonempty compact convex set containing the origin, and define
\begin{equation}\label{eq:projection-function}
 \phi_K(x):=\max_{g\in K}\left\{\ip{x}{g}-\frac12\norm{g}^2\right\}.
\end{equation}
Then $\phi_K\in\F_{0,1}(\R^d)$, $\min\phi_K=\phi_K(0)=0$, and
\begin{equation}\label{eq:projection-moreau}
 \phi_K(x)=\min_z\left\{\sigma_K(z)+\frac12\norm{x-z}^2\right\}
 =\frac12\norm{x}^2-\frac12\dist(x,K)^2,
 \qquad \sigma_K(z):=\max_{g\in K}\ip{z}{g},
\end{equation}
as well as
\begin{equation}\label{eq:projection-gradient}
 \nabla\phi_K(x)=\Proj_K(x).
\end{equation}
Moreover, a vector $g_i\in K$ equals $\nabla\phi_K(x_i)$ if and only if
\begin{equation}\label{eq:projection-condition}
 \ip{x_i-g_i}{g_i-g}\geq0\qquad\text{for every }g\in K.
\end{equation}
When $K=\conv\{0,g_0,\ldots,g_N\}$, it suffices to check~\eqref{eq:projection-condition} at those vertices.
\end{lemma}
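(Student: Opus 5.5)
I will prove the statement by identifying $\phi_K$ as a Moreau envelope and reading off its gradient from the projection. The starting point is a completion of squares inside the maximum:
\[
\ip{x}{g}-\tfrac12\norm{g}^2=\tfrac12\norm{x}^2-\tfrac12\norm{x-g}^2 .
\]
Maximizing over $g\in K$ gives $\phi_K(x)=\frac12\norm{x}^2-\frac12\dist(x,K)^2$, with the unique maximizer $g=\Proj_K(x)$. This is the second equality in~\eqref{eq:projection-moreau}.

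For the first equality I will use conjugacy. The function $h(g)=\frac12\norm{g}^2+\iota_K(g)$ is $1$-strongly convex, and $\phi_K=h^*$ by definition. By the standard infimal-convolution identity,
\[
h^*=\left(\iota_K\right)^*\,\square\,\left(\tfrac12\norm{\cdot}^2\right)^*=\sigma_K\,\square\,\tfrac12\norm{\cdot}^2 .
\]
This conjugate formula is valid here because $\frac12\norm{\cdot}^2$ is finite everywhere. Alternatively, one can verify directly by minimax: apply Sion's theorem to $\max_{g\in K}\min_z\{\ip{z}{g}+\frac12\norm{x-z}^2\}$, using compactness of $K$.

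Smoothness and the gradient formula then follow in two steps.
\begin{enumerate}
\item The conjugate of a $1$-strongly convex function is convex and differentiable, with $1$-Lipschitz gradient equal to the unique maximizer. That maximizer is $\Proj_K(x)$, which gives~\eqref{eq:projection-gradient}.
\item As a concrete check, Danskin's theorem gives the same gradient, and nonexpansiveness of $\Proj_K$ gives the Lipschitz constant $1$.
\end{enumerate}

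Next, the minimum. Since $0\in K$, we have $\phi_K\ge 0$ everywhere, and $\phi_K(0)=\max_{g\in K}\{-\frac12\norm{g}^2\}=0$. Hence $\min\phi_K=\phi_K(0)=0$.

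The characterization~\eqref{eq:projection-condition} is the variational inequality for the projection onto a closed convex set. A point $g_i\in K$ equals $\Proj_K(x_i)$ if and only if it minimizes the convex function $\frac12\norm{x_i-g}^2$ over $K$. That holds if and only if $\ip{g_i-x_i}{g-g_i}\ge0$ for all $g\in K$, which is the stated condition after rearranging signs.

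For the polytope reduction, let $K=\conv\{0,g_0,\ldots,g_N\}$. The map $g\mapsto\ip{x_i-g_i}{g_i-g}$ is affine in $g$, so its minimum over $K$ is attained at one of the points $0,g_0,\ldots,g_N$. The condition therefore needs checking only at those points. The origin must be included among them (it is a generator of $K$), and I read ``those vertices'' in the statement as including $0$.

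I do not expect a genuine obstacle. The only care needed is in justifying the inf-convolution/conjugacy equality rigorously rather than by formal manipulation. The cleanest route is to prove both sides equal $\frac12\norm{x}^2-\frac12\dist(x,K)^2$. For the right-hand side, substitute $z=x-g$ and use the Moreau decomposition $\sigma_K\,\square\,\frac12\norm{\cdot}^2=\frac12\norm{\cdot}^2-\frac12\dist(\cdot,K)^2$, which follows from $\sigma_K^*=\iota_K$ together with $\mathrm{prox}_{\sigma_K}=I-\Proj_K$.
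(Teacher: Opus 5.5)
Your proposal is correct and follows the same route as the paper's proof. Both identify $\phi_K$ as the conjugate of the $1$-strongly convex function $\frac12\norm{\cdot}^2+\delta_K$, which gives convexity and $1$-smoothness; both use the infimal-convolution identity with $\sigma_K^*=\delta_K$ for the Moreau form, complete the square to obtain the distance formula and the unique maximizer $\Proj_K(x)$, and appeal to the variational characterization of projection. Your affine-in-$g$ argument for checking only the generators $0,g_0,\ldots,g_N$ fills in a step the paper asserts without proof.
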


\begin{proof}
The conjugate of $\phi_K$ is $\delta_K(g)+\norm{g}^2/2$, which is $1$-strongly convex on its domain.  Hence $\phi_K$ is convex and $1$-smooth.  Since $\sigma_K^*=\delta_K$, the standard infimal-convolution identity gives the first equality in~\eqref{eq:projection-moreau}; completing the square gives its second equality and shows that the unique maximizer in~\eqref{eq:projection-function} is $\Proj_K(x)$.  This proves~\eqref{eq:projection-gradient}.  Since $0\in K$, $\phi_K\geq0$ and $\phi_K(0)=0$.  Finally,~\eqref{eq:projection-condition} is the standard variational characterization of Euclidean projection.
\end{proof}

Lemma~\ref{lem:projection-interpolant} is included as a convenient interface, not as a new interpolation theorem.  The ball/Huber specialization, the smooth max-over-simplex extension of Drori and Taylor~\cite{DroriTaylor2022}, and the exact polytope/Moreau formulation of Ma and Chen~\cite{MaChen2026} are the closest precedents.  What remains to be constructed is the FGM-specific sequence of spherical polytopes for which all required projections occur simultaneously.

Consequently, the lower-bound problem is geometric: construct equal-norm vectors $g_i$ and FGM points $x_i$ for which the residual $x_i-g_i$ belongs to the normal cone of a polytope at $g_i$.

\section{Coefficient calculus for FGM}

We now normalize $L=R=1$; Section~\ref{sec:main} restores the scaling.  It is convenient to represent FGM points as fixed linear combinations of a prescribed gradient list.

For a horizon $N$, let $e_0,\ldots,e_N$ denote the standard basis of $\R^{N+1}$.  Define coefficient vectors $p_k,r_k\in\R^{N+1}$ by
\begin{align}
 p_0&=r_0=0,\label{eq:coeff-init}\\
 r_{k+1}&=p_k-e_k,\label{eq:coeff-r}\\
 p_{k+1}&=r_{k+1}+\frac{t_k-1}{t_{k+1}}(r_{k+1}-r_k).\label{eq:coeff-p}
\end{align}
If $x_0$ and $g_0,\ldots,g_N$ are fixed, then the FGM relations are equivalent to
\begin{equation}\label{eq:x-coeff}
 x_k=x_0+\sum_{j=0}^N(p_k)_j g_j,
 \qquad y_{k+1}=x_0+\sum_{j=0}^N(r_{k+1})_j g_j.
\end{equation}

The construction will set $g_0=\cdots=g_4$.  For $N\geq4$, put $m_N=N-3$ and define $E_N\in\R^{(N+1)\times m_N}$ by
\begin{equation}\label{eq:merge-map}
 E_N^\top e_j=\begin{cases}
 e_0,&0\leq j\leq4,\\
 e_{j-4},&5\leq j\leq N.
 \end{cases}
\end{equation}
Here and below, standard basis vectors on the right live in the reduced space $\R^{m_N}$.  Let $\iota(j)=0$ for $j\leq4$ and $\iota(j)=j-4$ otherwise.  Define
\begin{align}
 w_N&:=E_N^\top(t_0,\ldots,t_N)^\top,\label{eq:wN}\\
 a_{i,N}&:=w_N+E_N^\top p_i,\label{eq:a-iN}\\
 d_{i,N}&:=a_{i,N}-e_{\iota(i)},\label{eq:d-iN}\\
 c_N&:=d_{N,N},\qquad C_N:=\one^\top c_N.\label{eq:cN}
\end{align}
If $v_0,\ldots,v_{m_N-1}$ are reduced gradient directions and $x_0=\sum_j (w_N)_jv_j$, then $a_{i,N}$ are the coefficients of $x_i$ and $d_{i,N}$ are the coefficients of $x_i-v_{\iota(i)}$.

The identities needed by the lift are elementary consequences of~\eqref{eq:coeff-init}--\eqref{eq:coeff-p}.  We record them together.

\begin{lemma}[FGM coefficient identities]\label{lem:coefficient-identities}
Let $N\geq4$, $T_N:=\sum_{k=0}^N t_k=t_N^2$, and $S_N:=\sum_{k=0}^N t_k^2$.  Then:
\begin{align}
 c_N&=\left(1-\frac1{t_N}\right)d_{N-1,N},\label{eq:terminal-proportionality}\\
 c_{N+1}&=\begin{pmatrix}
 \left(1-\frac1{t_{N+1}}\right)c_N\\ t_{N+1}-1
 \end{pmatrix},\label{eq:c-recursion}\\
 C_N&=\frac{T_N^2-S_N}{2T_N},\label{eq:C-closed}\\
 \one^\top d_{i,N}&\geq C_N\qquad(0\leq i\leq N),\label{eq:coefficient-monotonicity}\\
 d_{i,N+1}&=\begin{pmatrix}d_{i,N}\\ t_{N+1}\end{pmatrix}
 \qquad(0\leq i\leq N).\label{eq:old-coefficient-extension}
\end{align}
The vectors in~\eqref{eq:c-recursion} use the natural embedding $\R^{m_N}\subset\R^{m_N+1}$.
\end{lemma}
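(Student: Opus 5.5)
The proof works in unreduced coordinates and then applies $E_N^\top$. Every identity is linear or a sum of entries, and $E_N^\top$ commutes with both operations: it is linear, and $\one^\top E_N^\top v=\one^\top v$ because every column of $E_N$ is a standard basis vector. Write $\tau^{(N)}=(t_0,\ldots,t_N)^\top$. Since $E_N^\top e_i=e_{\iota(i)}$ and $r_{i+1}=p_i-e_i$, the definitions give
\[
 d_{i,N}=E_N^\top\delta_i^{(N)},\qquad \delta_i^{(N)}:=\tau^{(N)}+r_{i+1}\in\R^{N+1}.
\]
So $\delta_i^{(N)}$ is the coefficient vector of $z_i=y_{i+1}$ in the full gradient list. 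By the recursion~\eqref{eq:coeff-init}--\eqref{eq:coeff-p}, $p_k$ and $r_k$ are supported on $e_0,\ldots,e_{k-1}$, so they do not depend on $N$ beyond zero padding.

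First, I would copy the $\eta$-argument from the proof of Lemma~\ref{lem:radius-identity} at the coefficient level. Set $\delta_{-1}:=\tau^{(N)}$ and $\eta_i:=t_i\delta_i-(t_i-1)\delta_{i-1}$. The same manipulation of~\eqref{eq:coeff-p} gives $\eta_i=\eta_{i-1}-t_ie_i$, hence
\[
 \eta_i=\tau^{(N)}-\sum_{k\le i}t_ke_k,
\]
and in particular $\eta_N=0$. This says $t_N\delta_N=(t_N-1)\delta_{N-1}$, and applying $E_N^\top$ gives~\eqref{eq:terminal-proportionality}. For~\eqref{eq:old-coefficient-extension}: when $i\le N$, the vector $r_{i+1}$ has zero $(N+1)$st entry, so $\delta_i^{(N+1)}=(\delta_i^{(N)},t_{N+1})$. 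Under $E_{N+1}^\top$ that last index maps to the new reduced coordinate $m_N$, which gives the claim. Then~\eqref{eq:c-recursion} follows by applying~\eqref{eq:terminal-proportionality} at horizon $N+1$ and substituting~\eqref{eq:old-coefficient-extension} with $i=N$:
\[
 c_{N+1}=(1-1/t_{N+1})(c_N,t_{N+1}),
\]
and the last entry is $t_{N+1}-1$.

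For the sums, let $\sigma_i:=\one^\top\delta_i=\one^\top d_{i,N}$, so $C_N=\sigma_N$. Summing the $\eta$ relation gives $t_i\sigma_i-(t_i-1)\sigma_{i-1}=T_N-T_i$. Multiplying by $t_i$ and using $t_i(t_i-1)=t_{i-1}^2$ and $T_i=t_i^2$ from~\eqref{eq:t-identities} makes this telescope. The $i=0$ term needs no $\sigma_{-1}$ because $t_0=1$. The result is
\[
 \sigma_i=\frac{1}{T_i}\sum_{k=0}^i t_k(T_N-T_k).
\]
At $i=N$ this becomes $C_N=T_N-\frac{1}{T_N}\sum_k t_k^3$. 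The closed form~\eqref{eq:C-closed} then reduces to the identity $\sum_k t_k^3=(T_N^2+S_N)/2$. To prove it, expand $T_N^2=S_N+2\sum_i t_iT_{i-1}$ and substitute $T_{i-1}=t_i^2-t_i$.

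The step needing an actual idea is~\eqref{eq:coefficient-monotonicity}; the rest is bookkeeping. For it, I would read the formula for $\sigma_i$ as a weighted average. The weights $t_k/T_i$ for $k\le i$ are positive and sum to $1$, and the values $T_N-T_k$ are nonincreasing in $k$. Passing from $i$ to $i+1$ adds a value no larger than all earlier ones, so the average cannot increase. Hence $\sigma_i$ is nonincreasing in $i$, which gives $\sigma_i\ge\sigma_N=C_N$.
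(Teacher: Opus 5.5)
Your proof is correct. For \eqref{eq:old-coefficient-extension} and \eqref{eq:c-recursion} it is the paper's argument: the support of $p_i$ and $r_{i+1}$ gives the extension, and terminal proportionality at horizon $N+1$ then gives the recursion. Where you differ is in the terminal identity and the coefficient sums.

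The paper works with $h_i:=-\one^\top p_i$. It gets monotonicity from the increment bound $h_{i+1}-h_i=1+\frac{t_i-1}{t_{i+1}}(h_i-h_{i-1})\geq1$. It obtains $h_i+1=(T_i^2+S_i)/(2T_i)$ by an induction it does not write out, and it asserts \eqref{eq:terminal-proportionality} by applying ``the same two-term recurrence componentwise.''

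You instead transport the $\eta$-invariant of Lemma~\ref{lem:radius-identity} to coefficient vectors. The single identity $t_i\delta_i-(t_i-1)\delta_{i-1}=\tau^{(N)}-\sum_{k\leq i}t_ke_k$ then does two jobs. At $i=N$ it gives \eqref{eq:terminal-proportionality}, which is literally $\eta_N=0$. After summing entries and telescoping, it gives the explicit weighted-average formula $\sigma_i=T_i^{-1}\sum_{k\leq i}t_k(T_N-T_k)$. This agrees with the paper: it says $h_i+1=T_i^{-1}\sum_{k\leq i}t_k^3$, and your power-sum identity $\sum_{k\leq i}t_k^3=(T_i^2+S_i)/2$ turns that into the paper's closed form.

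Your route is more explicit, proves every step the paper only asserts, and shows why the terminal identity and the radius identity share one mechanism. The paper's monotonicity argument is shorter, since it needs only the increment bound and no closed form.

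One small slip: it is each row of $E_N$ (each column of $E_N^\top$) that is a standard basis vector, i.e.\ $E_N\one=\one$. Column $0$ of $E_N$ has five nonzero entries. Your conclusion $\one^\top E_N^\top v=\one^\top v$ is unaffected.
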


\begin{proof}
Write $h_i:=-\one^\top p_i$, the sum of the fixed-step coefficients used to form $x_i$.  From~\eqref{eq:coeff-r}--\eqref{eq:coeff-p}, $h_0=0$, $h_1=1$, and
\[
 h_{i+1}=h_i+1+\frac{t_i-1}{t_{i+1}}(h_i-h_{i-1})\qquad(i\geq1).
\]
Thus $h_i$ is increasing.  Since $\one^\top d_{i,N}=T_N-h_i-1$, this proves~\eqref{eq:coefficient-monotonicity} once the formula for $C_N$ is established.  A direct induction using $t_i^2=t_{i-1}^2+t_i$ gives
\[
 h_i+1=\frac{T_i^2+S_i}{2T_i},
\]
and therefore $C_N=T_N-h_N-1=(T_N^2-S_N)/(2T_N)$.  The same two-term recurrence, applied componentwise before taking row sums, gives~\eqref{eq:terminal-proportionality}.

For completeness, write $p_i^{[N]}$ when the recurrence is represented in $\R^{N+1}$.  Up to index $i$, it uses only $e_0,\ldots,e_{i-1}$, so $p_i^{[N+1]}=(p_i^{[N]},0)$ for $i\leq N$.  The merge maps consequently give
\[
 w_{N+1}=(w_N,t_{N+1}),\qquad
 E_{N+1}^\top p_i^{[N+1]}=(E_N^\top p_i^{[N]},0).
\]
The reduced basis vector $e_{\iota(i)}$ is embedded as $(e_{\iota(i)},0)$; subtracting it proves~\eqref{eq:old-coefficient-extension}.  Applying~\eqref{eq:terminal-proportionality} at horizon $N+1$ and then~\eqref{eq:old-coefficient-extension} with $i=N$ gives
\[
 c_{N+1}=\left(1-\frac1{t_{N+1}}\right)d_{N,N+1}
 =\begin{pmatrix}
   (1-t_{N+1}^{-1})c_N\\ t_{N+1}-1
  \end{pmatrix},
\]
which is~\eqref{eq:c-recursion}.  These calculations are also reproduced symbolically in the supplementary script \texttt{fgm\_recursive\_adversary.py}.
\end{proof}

\section{The algebraic seed at horizon seven}

We next give a finite exact seed.  All numbers in this section are algebraic, since they are obtained from~\eqref{eq:t-rec} and linear equations.

The merged prefix $g_0=\cdots=g_4$ is an ansatz suggested by the numerical clustering described in the introduction; it is not forced by the PEP.  At a horizon $H\geq5$, this ansatz leaves $m=H-3$ distinct unit directions and hence $m(m-1)/2$ free off-diagonal Gram entries.  Closure $G_Hc_H=0$ gives $m$ nominal affine equations.  In Lemma~\ref{lem:radius-identity}, the first four adjacent terms vanish because of the merged prefix, while closure and~\eqref{eq:terminal-proportionality} give $z_H=z_{H-1}=0$.  The remaining active-face equalities are indexed by $i=5,\ldots,H-1$, hence contribute $H-5=m-2$ equations.  The nominal count is therefore $2m-2$, and
\[
 \frac{m(m-1)}2\geq2m-2
 \quad\Longleftrightarrow\quad (m-1)(m-4)\geq0.
\]
Thus $m=4$, or $H=7$, is the first non-overdetermined horizon in this five-prefix construction.  At $H=7$ the system is square, and the certified nonzero determinant below proves that its six equations are independent.  This count explains the threshold inside our ansatz; it is not an impossibility proof for other ansatzes or other matching instances at $H\leq6$.

For $N=7$, the reduced directions are $v_0=g_0=\cdots=g_4$, $v_1=g_5$, $v_2=g_6$, and $v_3=g_7$.  In particular,
\[
 w_7=(t_4^2,t_5,t_6,t_7)^\top
     \approx(10.8562321,3.8326014,4.3650787,4.8936218)^\top.
\]
For orientation, the coefficient recurrence gives
\[
\begin{array}{c|rrrr}
i &(d_{i,7})_0&(d_{i,7})_1&(d_{i,7})_2&(d_{i,7})_3\\ \hline
0&9.8562321&3.8326014&4.3650787&4.8936218\\
4&4.1916291&3.8326014&4.3650787&4.8936218\\
5&3.0979518&2.8326014&4.3650787&4.8936218
\end{array}
\]
The decimals are illustrative; the exact values are defined by~\eqref{eq:coeff-init}--\eqref{eq:cN}.  Let $G_7\in\mathbb{S}^4$ be the symmetric matrix with unit diagonal whose six off-diagonal entries are uniquely determined by
\begin{align}
 G_7c_7&=0,\label{eq:seed-null}\\
 d_{4,7}^\top G_7(e_0-e_1)&=0,\label{eq:seed-face1}\\
 d_{5,7}^\top G_7(e_1-e_2)&=0.\label{eq:seed-face2}
\end{align}
Equations~\eqref{eq:seed-null}--\eqref{eq:seed-face2} form a $6\times6$ linear system.  For orientation only, its solution is
\begin{equation}\label{eq:G7-numeric}
G_7\approx
\begin{pmatrix}
1& .3936432000&-.1082450940&-.5892518844\\
.3936432000&1&-.0145831950&-.6283114426\\
-.1082450940&-.0145831950&1&-.6283114426\\
-.5892518844&-.6283114426&-.6283114426&1
\end{pmatrix}.
\end{equation}
The exact definition is the linear system, not the displayed decimals.

\begin{figure}[t]
\centering
\begin{tikzpicture}[scale=1.85,
  vertex/.style={circle,fill=blue!65!black,inner sep=1.5pt},
  edge/.style={gray!65}, residual/.style={->,very thick,orange!85!black}]
\coordinate (o)  at ( 0.0000, 0.0000);
\coordinate (v0) at ( 1.0000, 0.0000);
\coordinate (v1) at ( 0.3936, 0.9193);
\coordinate (v2) at ( 0.2395, 0.2789);
\coordinate (v3) at (-0.8284,-0.6020);
\foreach \a in {v0,v1,v2,v3}{\draw[edge] (o)--(\a);}
\draw[edge,dashed] (v0)--(v1)--(v2)--(v3)--(v0);
\draw[residual] (v0)--++(0.6730,0.4487)
 node[right] {$z_4\perp(v_0-v_1)$};
\draw[residual] (v1)--++(0.4216,0.3064)
 node[above] {$z_5\perp(v_1-v_2)$};
\fill[black] (o) circle (1.2pt) node[below right] {$0$};
\foreach \a/\lab in {v0/v_0,v1/v_1,v2/v_2,v3/v_3}
  {\node[vertex,label=above:$\lab$] at (\a) {};}
\end{tikzpicture}
\caption{An oblique projection of the rank-three seed at $N=7$.  The first five queried gradients share direction $v_0$; the remaining directions are $v_1,v_2,v_3$.  The arrows show the two active neighboring-face residuals in the seed equations.  Orthogonality holds in the ambient three-dimensional realization and need not appear as a right angle after projection.}
\label{fig:seed}
\end{figure}

\begin{lemma}[Certified seed]\label{lem:seed}
The matrix $G_7$ exists uniquely, is positive semidefinite of rank three, and satisfies
\begin{align}
 d_{i,7}^\top G_7\bigl(e_{\iota(i)}-e_j\bigr)&\geq0
 &&(0\leq i\leq7,\ 0\leq j\leq3),\label{eq:seed-projection-vertices}\\
 d_{i,7}^\top G_7e_{\iota(i)}&\geq0
 &&(0\leq i\leq7),\label{eq:seed-projection-zero}\\
 w_7^\top G_7w_7&=S_7.\label{eq:seed-radius}
\end{align}
Moreover, $G_7d_{6,7}=G_7d_{7,7}=0$.
\end{lemma}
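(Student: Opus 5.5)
The plan is to make Lemma~\ref{lem:seed} a finite, exact-arithmetic verification.

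\textbf{Step 1: compute the coefficients exactly.} First compute the exact algebraic data: $t_0,\ldots,t_7$ from~\eqref{eq:t-rec}, then $p_i$ from~\eqref{eq:coeff-init}--\eqref{eq:coeff-p}, and finally $w_7$, $d_{i,7}$, $c_7$ from~\eqref{eq:wN}--\eqref{eq:cN}. These all live in the tower $\mathbb{Q}(t_1,\ldots,t_7)$. I will carry them either symbolically or as outward-rounded intervals of width about $10^{-30}$.

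\textbf{Step 2: solve for $G_7$.} Write the six unknown off-diagonal entries as a vector $\gamma\in\R^6$. Equations~\eqref{eq:seed-null}--\eqref{eq:seed-face2} are affine in $\gamma$, with the unit diagonal moved to the right-hand side, so they take the form $A\gamma=b$ with $A$ a $6\times6$ matrix. Existence and uniqueness follow from $\det A\neq0$. I would certify this by an interval evaluation of the determinant whose enclosure excludes zero. Interval Gaussian elimination then yields an enclosure of $\gamma$ around the decimals in~\eqref{eq:G7-numeric}.

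\textbf{Step 3: rank three and the kernel.} Since $G_7c_7=0$ and $c_7\neq0$ (its last entry is $t_7-1>0$), we have $\operatorname{rank}G_7\leq3$. For positive semidefiniteness and rank exactly three, it suffices to show the leading $3\times3$ principal minors are positive: the $1\times1$ minor is $1$, the $2\times2$ minor is $1-\gamma_{01}^2$, and the $3\times3$ determinant is certified positive by intervals. With a $3\times3$ block positive definite and a one-dimensional kernel forced by $c_7$, the Schur complement is zero, so $G_7$ is PSD of rank three.

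For the kernel identities, apply~\eqref{eq:terminal-proportionality} with $N=7$: $c_7=(1-1/t_7)d_{6,7}$. Since $d_{7,7}=c_7$, this gives $G_7d_{6,7}=G_7d_{7,7}=0$ exactly.

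\textbf{Step 4: the radius identity.} Realize $G_7$ as the Gram matrix of unit vectors $v_0,\ldots,v_3$, set $u_k=v_{\iota(k)}$, and apply Lemma~\ref{lem:radius-identity} with $N=7$. Each term vanishes:
\begin{itemize}
\item for $i\le4$, $u_{i-1}=u_i$;
\item $i=5$ is~\eqref{eq:seed-face1}, since $z_4$ has coefficients $d_{4,7}$;
\item $i=6$ is~\eqref{eq:seed-face2};
\item $i=7$ and the terminal term vanish because $z_6=z_7=0$ by Step~3.
\end{itemize}
Hence $w_7^\top G_7w_7=\norm{x_0}^2=S_7$ exactly, which is~\eqref{eq:seed-radius}, and no floating-point equality test is needed.

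\textbf{Step 5: the projection inequalities.} There are $8\times4$ inequalities in~\eqref{eq:seed-projection-vertices} and $8$ in~\eqref{eq:seed-projection-zero}. Many hold as exact equalities:
\begin{itemize}
\item $j=\iota(i)$ gives $0$;
\item $i=6,7$ give $0$ by the kernel identities;
\item the two face equations give $0$.
\end{itemize}
For each remaining pair I would evaluate $d_{i,7}^\top G_7(e_{\iota(i)}-e_j)$ in interval arithmetic and check that the lower endpoint is nonnegative.

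\textbf{Main obstacle.} The difficulty is any inequality that is tight without being one of the imposed equalities, since an interval enclosure straddles zero there. If that happens, I would first try to identify it as an algebraic consequence of the imposed equations, for instance via the merged prefix: for $i\le4$ the vectors $d_{i,7}$ differ only in coordinate $0$, so linear combinations of the face and null equations may express it. Failing that, I would compute it symbolically in the number field. I expect the genuinely positive margins to be at least of order $10^{-3}$ (compare the figure), so moderate-precision intervals should suffice for everything else.
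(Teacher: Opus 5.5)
Your proposal is correct and follows essentially the same route as the paper: outward-rounded interval certificates for the $6\times6$ determinant, for positive semidefiniteness, and for the projection margins that are not identically zero, together with the exact identities $G_7d_{6,7}=G_7d_{7,7}=0$ from~\eqref{eq:terminal-proportionality} and the exact radius~\eqref{eq:seed-radius} from Lemma~\ref{lem:radius-identity}, with each term vanishing for the same reasons you list. The differences are minor: you certify the leading $3\times3$ block and conclude via the Schur complement, whereas the paper checks all principal minors of order at most three (smallest $\approx0.166$), and the obstacle you anticipate does not arise, since the paper reports every remaining projection margin to be at least about $0.0568$.
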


\begin{proof}
The determinant of the $6\times6$ system~\eqref{eq:seed-null}--\eqref{eq:seed-face2} lies in
\[
[221.65769092133139765209672580134623832952,
 221.65769092133139765209672580134623832953],
\]
so the solution is unique.  Substitution with outward-rounded intervals shows that every principal minor of order at most three is positive; the smallest lower endpoint is $0.16613798382964938$.  Since $G_7c_7=0$ with $c_7\neq0$, $\det G_7=0$, hence $G_7\succeq0$ and $\operatorname{rank}G_7=3$.

The same substitution verifies~\eqref{eq:seed-projection-vertices}--\eqref{eq:seed-projection-zero}.  After removing identities that are zero by construction, the smallest positive residual has lower endpoint $0.05680185218226115$.  By~\eqref{eq:terminal-proportionality} and~\eqref{eq:seed-null}, $G_7d_{6,7}=G_7d_{7,7}=0$.  The adjacent terms in Lemma~\ref{lem:radius-identity} vanish: the first four because $v_0=\cdots=v_4$, the next two by~\eqref{eq:seed-face1}--\eqref{eq:seed-face2}, and the last because $G_7d_{6,7}=0$.  The terminal term vanishes because $G_7d_{7,7}=0$.  Hence Lemma~\ref{lem:radius-identity} proves~\eqref{eq:seed-radius} exactly.  The self-contained script \texttt{fgm\_base7\_rational\_certificate.py} uses only exact rational endpoints and integer square roots; a high-precision secondary \texttt{mpmath.iv} replay is provided in \texttt{fgm\_base7\_interval\_certificate.py}.
\end{proof}

Every $t_k$ is algebraic by~\eqref{eq:t-rec}, and the nonzero determinant above makes each entry of $G_7$ an element of the algebraic field $\mathbb{Q}(t_1,\ldots,t_7)$.  The certificate evaluates rational outward-rounded enclosures for these elements.  An enclosure excluding zero fixes the sign of the enclosed nonzero algebraic number.  Thus the lower bounds $0.166137983829$ for the required principal minors, $0.056801852182$ for the strict projection margins, and $4.789612460337$ for the base-lift gap $C_7-t_8$ constitute rigorous sign certificates, rather than floating-point evidence.  Every arithmetic operation used by the certificate is inclusion-isotone, and division is performed only after its denominator interval has been certified to exclude zero; consequently, every printed interval contains the exact algebraic quantity.  This is only a fixed-dimensional sign check; no horizon-dependent SDP or numerical optimization appears in the theorem.

\section{An FGM-compatible spherical cone lift}\label{sec:lift}

The pyramid operation underlying the next lemma is elementary.  Its role here is the more specific one of preserving all old normal-cone inequalities under a horizon extension whose scalar parameter is dictated by FGM.

\begin{lemma}[Projection-preserving spherical lift]\label{lem:spherical-lift}
Let $v_0,\ldots,v_{m-1}$ be unit vectors with Gram matrix $G$, let residuals $z_i=\sum_j d_{ij}v_j$ carry vertex labels $\ell(i)\in\{0,\ldots,m-1\}$, and suppose
\begin{equation}\label{eq:old-proj}
 \ip{z_i}{v_{\ell(i)}-v_j}\geq0,\qquad
 \ip{z_i}{v_{\ell(i)}}\geq0
\end{equation}
for all $i,j$.  Suppose $z_{\rm last}=0$, $D_i:=\sum_jd_{ij}\geq C>0$, and $D_{\rm last}=C$.

Choose $s\in(0,1)$ and a unit vector $e$ orthogonal to all $v_j$.  Put $a=\sqrt{1-s^2}$ and
\begin{equation}\label{eq:lift-vectors}
 v_j'=av_j+se\quad(0\leq j<m),
 \qquad v_m'=-e.
\end{equation}
For $\theta=sC$, define lifted old residuals
\begin{equation}\label{eq:lift-residual}
 z_i'=az_i+(sD_i-\theta)e.
\end{equation}
Then every $z_i'$ satisfies the projection inequalities against $0,v_0',\ldots,v_m'$.  If $i={\rm last}$, then $z_i'=0$.
\end{lemma}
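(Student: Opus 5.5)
The plan is a direct computation: expand every lifted inner product in the orthogonal decomposition $\R^{d}\oplus\R e$ and reduce it to the hypotheses~\eqref{eq:old-proj} and $D_i\geq C$. The objects involved are the lifted vertex set $\{0,v_0',\ldots,v_m'\}$ and, for each old residual, the label $\ell(i)$, so the lifted gradient is $v'_{\ell(i)}$. By the vertex-reduction remark in Lemma~\ref{lem:projection-interpolant}, it suffices to verify $\ip{z_i'}{v'_{\ell(i)}-g}\geq0$ for $g=0$, for $g=v_j'$ with $j<m$, and for $g=v_m'=-e$.

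The single identity driving everything follows from $e\perp v_j$ (hence $e\perp z_i$), $\norm{e}=1$, and $\theta=sC$:
\[
 \ip{z_i'}{v_j'}=a^2\ip{z_i}{v_j}+s(sD_i-\theta)=a^2\ip{z_i}{v_j}+s^2(D_i-C),
 \qquad
 \ip{z_i'}{e}=s(D_i-C).
\]
The three checks then read as follows.

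\begin{enumerate}[leftmargin=2em]
\item Against an old lifted vertex $v_j'$, the $e$-components cancel, because all old vertices share the common latitude $s$. This leaves $a^2\ip{z_i}{v_{\ell(i)}-v_j}\geq0$ by~\eqref{eq:old-proj}.
\item Against the origin, we get $a^2\ip{z_i}{v_{\ell(i)}}+s^2(D_i-C)$. Both terms are nonnegative, by~\eqref{eq:old-proj} and by $D_i\geq C$.
\item Against the apex $v_m'=-e$, we get $\ip{z_i'}{v'_{\ell(i)}+e}=a^2\ip{z_i}{v_{\ell(i)}}+s(1+s)(D_i-C)\geq0$, for the same two reasons.
\end{enumerate}

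Finally, for $i={\rm last}$ we have $z_{\rm last}=0$ and $sD_{\rm last}-\theta=s(C-C)=0$, so $z'_{\rm last}=0$.

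The only point requiring care, and the one I would flag as the main obstacle, is conceptual rather than computational. One must make sure that the lifted residual~\eqref{eq:lift-residual} really is the residual $x_i'-v'_{\ell(i)}$ expressed in the new vectors. Writing $\sum_j d_{ij}v_j'=az_i+sD_ie$ shows that the coefficient representation gives exactly $az_i+sD_ie$. The extra shift $-\theta e$ must therefore come from the new coefficient column, namely the contribution of the new direction $v_m'=-e$ with coefficient $\theta$. In the statement itself, $z_i'$ is simply defined by~\eqref{eq:lift-residual}, so the lemma needs only the inequalities above. The matching of $\theta$ with FGM's new coefficient $t_{N+1}$ (cf.~\eqref{eq:old-coefficient-extension}) and the choice of $s$ will be handled where the lemma is applied.

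I would also note explicitly where each hypothesis enters. The condition $s\in(0,1)$ ensures $a>0$ and that the $v_j'$ are unit vectors. The condition $C>0$ is not needed for the inequalities themselves; it matters only later, for solving for $s$.
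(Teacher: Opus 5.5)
Your proof is correct and is essentially the paper's proof. The paper sets $\delta_i=s(D_i-C)\geq0$ and obtains exactly your three expansions, $a^2\ip{z_i}{v_{\ell(i)}-v_j}$, $a^2\ip{z_i}{v_{\ell(i)}}+s\delta_i$, and $a^2\ip{z_i}{v_{\ell(i)}}+(1+s)\delta_i$, and then notes $z'_{\rm last}=0$. One small correction to your hypothesis bookkeeping: the apex term $s(1+s)(D_i-C)$ is nonnegative only because $s\geq0$, and $a>0$ is not actually used in this lemma.
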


\begin{proof}
Set $\delta_i=s(D_i-C)\geq0$.  For an old vertex $v_j'$,
\[
 \ip{z_i'}{v_{\ell(i)}'-v_j'}=a^2\ip{z_i}{v_{\ell(i)}-v_j}\geq0.
\]
The inequalities against the origin and the new vertex are, respectively,
\begin{align*}
 \ip{z_i'}{v_{\ell(i)}'}
 &=a^2\ip{z_i}{v_{\ell(i)}}+s\delta_i\geq0,\\
 \ip{z_i'}{v_{\ell(i)}'-v_m'}
 &=a^2\ip{z_i}{v_{\ell(i)}}+(1+s)\delta_i\geq0.
\end{align*}
For the last residual, both $z_{\rm last}=0$ and $D_{\rm last}=C$ hold, so~\eqref{eq:lift-residual} is zero.
\end{proof}

At the Gram level,~\eqref{eq:lift-vectors} is
\begin{equation}\label{eq:gram-lift}
G'=
\begin{pmatrix}
(1-s^2)G+s^2\one\one^\top&-s\one\\
-s\one^\top&1
\end{pmatrix}.
\end{equation}
The vector realization proves $G'\succeq0$ without a Schur-complement calculation.

\section{Exact worst-case performance}\label{sec:main}

We now recursively construct the adversary.  Start from $G_7$.  Given $G_N$ and $c_N$, define
\begin{equation}\label{eq:sN}
 s_N:=\frac{t_{N+1}}{C_N},\qquad C_N=\one^\top c_N,
\end{equation}
and obtain $G_{N+1}$ from~\eqref{eq:gram-lift} with $s=s_N$.

\begin{lemma}[Well-defined recursion]\label{lem:well-defined}
For every $N\geq7$, $0<s_N<1$.  The matrix $G_N$ is the Gram matrix of $m_N=N-3$ unit vectors spanning a space of dimension $N-4$, and
\begin{align}
G_Nc_N&=0,\label{eq:GN-null}\\
w_N^\top G_Nw_N&=S_N,\label{eq:GN-radius}\\
d_{i,N}^\top G_N(e_{\iota(i)}-e_j)&\geq0,\label{eq:GN-proj-old}\\
d_{i,N}^\top G_Ne_{\iota(i)}&\geq0.\label{eq:GN-proj-zero}
\end{align}
The projection inequalities hold for all compatible $i,j$.
\end{lemma}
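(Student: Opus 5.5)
Induction on $N$, with Lemma~\ref{lem:seed} as the base and Lemma~\ref{lem:spherical-lift} together with Lemma~\ref{lem:coefficient-identities} as the inductive step. At $N=7$, Lemma~\ref{lem:seed} gives a PSD rank-three $G_7$ with unit diagonal, so it is the Gram matrix of four unit vectors spanning dimension $3=N-4$. The same lemma supplies \eqref{eq:GN-null}--\eqref{eq:GN-proj-zero} directly: \eqref{eq:seed-null}, \eqref{eq:seed-radius}, \eqref{eq:seed-projection-vertices} and \eqref{eq:seed-projection-zero}. The certified gap $C_7-t_8>0$ gives $s_7<1$, and $s_7>0$ holds because $C_7>0$ by \eqref{eq:C-closed}, since $T_7^2>S_7$.

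For the inductive step, assume the claims at horizon $N$. Apply Lemma~\ref{lem:spherical-lift} with $m=m_N$, residual coefficients $d_{i,N}$, labels $\ell(i)=\iota(i)$, $C=C_N$, and ``last'' $=N$. Its hypotheses check out as follows. The inequalities \eqref{eq:old-proj} are \eqref{eq:GN-proj-old}--\eqref{eq:GN-proj-zero}. We have $z_N=\sum_j(c_N)_jv_j=0$ by \eqref{eq:GN-null}. Finally $D_i=\one^\top d_{i,N}\geq C_N=D_N$ by \eqref{eq:coefficient-monotonicity}.

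Take $s=s_N$ and a fresh orthogonal unit direction $e$; this raises the span dimension by exactly one, to $N-3=(N+1)-4$, because $-e$ is independent of the old span. The key point is to identify the lifted objects with the horizon-$(N+1)$ FGM coefficients, using $\theta=s_NC_N=t_{N+1}$.

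First, by \eqref{eq:old-coefficient-extension}, $d_{i,N+1}=(d_{i,N},t_{N+1})$. The residual in the new vectors is then
\[
\sum_{j<m}(d_{i,N})_j(av_j+se)-t_{N+1}e=az_i+(sD_i-\theta)e,
\]
which is exactly $z_i'$. So the old inequalities, and those against $0$ and the new vertex $v_m'$, all follow from the lemma.

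Second, the new terminal residual needs \eqref{eq:c-recursion}:
\[
G_{N+1}c_{N+1}\;\leftrightarrow\;(1-t_{N+1}^{-1})\,z_N'-(t_{N+1}-1)e\quad\text{wait}
\]
More carefully, the vector $\sum_j(c_{N+1})_jv_j'$ equals
\[
(1-t_{N+1}^{-1})\bigl(az_N+sC_Ne\bigr)-(t_{N+1}-1)e .
\]
Since $z_N=0$ and $sC_N=t_{N+1}$, this is $\bigl[(1-t_{N+1}^{-1})t_{N+1}-(t_{N+1}-1)\bigr]e=0$, which proves \eqref{eq:GN-null} at $N+1$.

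The new residual $z_{N+1}$ is zero. Hence it satisfies the projection inequalities at label $\iota(N+1)=m$ trivially, and \eqref{eq:GN-proj-old}--\eqref{eq:GN-proj-zero} hold for $i=N+1$.

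For the radius \eqref{eq:GN-radius}, I would avoid direct expansion and invoke Lemma~\ref{lem:radius-identity}. All adjacent terms $\langle z_{i-1},u_{i-1}-u_i\rangle$ for $i\leq N$ are preserved up to the factor $a^2$, because these are differences of old vertices. At horizon $N$ they were zero by the inductive structure, assuming I carry ``adjacent inner products vanish'' as part of the induction hypothesis, which is how \eqref{eq:GN-radius} was derived at the seed. The new adjacent term $\langle z_N',u_N-u_{N+1}\rangle$ vanishes since $z_N'=0$, and the terminal term vanishes since $z_{N+1}=0$.

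The remaining point is $0<s_{N+1}<1$, i.e. $C_{N+1}>t_{N+2}$. By \eqref{eq:c-recursion},
\[
C_{N+1}=(1-t_{N+1}^{-1})C_N+t_{N+1}-1,
\]
and the inductive hypothesis gives $C_N>t_{N+1}$. Hence $C_{N+1}>(1-t_{N+1}^{-1})t_{N+1}+t_{N+1}-1=2t_{N+1}-2$. This exceeds $t_{N+2}$ once $t_{N+1}\gtrsim 3$, since $t_{N+2}\leq t_{N+1}+1$. Alternatively I would use \eqref{eq:C-closed}, where $C_N\sim T_N/2\sim N^2/8$ while $t_{N+1}\sim N/2$. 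I expect this inequality, and the bookkeeping that the radius identity's adjacent terms stay zero through lifts, to be the main checks. The inequality is the likelier obstacle only for small $N$, and it is covered by the certified margin at $N=7$.
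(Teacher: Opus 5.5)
Your proof is correct and, except for the radius step, is the paper's proof. It has the same base case from Lemma~\ref{lem:seed}, the same identification of the horizon-$(N+1)$ residuals with the lifted residuals of Lemma~\ref{lem:spherical-lift} via \eqref{eq:old-coefficient-extension} and \eqref{eq:coefficient-monotonicity}, the same null-vector check from \eqref{eq:c-recursion}, and the same bound $C_{N+1}>2t_{N+1}-2>t_{N+2}$.

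The genuine difference is the radius. The paper writes $u_{N+1}=au_N+(s_NT_N-t_{N+1})e$ and expands $\norm{u_{N+1}}^2=(1-s_N^2)S_N+(s_NT_N-t_{N+1})^2$. It then uses the closed form \eqref{eq:C-closed}, i.e.\ $s_N=2T_Nt_{N+1}/(T_N^2-S_N)$, to reduce this to $S_N+t_{N+1}^2$. You instead add to the induction hypothesis that all adjacent terms $\ip{z_{i-1}}{u_{i-1}-u_i}$ vanish, and reapply the unconditional identity \eqref{eq:radius-identity} at each horizon. This works for three reasons:
\begin{itemize}
\item the invariant holds at the seed, since that is exactly how Lemma~\ref{lem:seed} obtains \eqref{eq:seed-radius};
\item the old terms are multiplied by $a^2$, because the $s_Ne$ components cancel in $u_{i-1}'-u_i'$ and $e$ is orthogonal to the old span;
\item the new adjacent term has $z_N'=0$ and the terminal term has $z_{N+1}'=0$.
\end{itemize}
Your route needs no closed form for $C_N$ in this step. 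It also shows explicitly that the consecutive inequalities $Q_{i-1,i}$ and the terminal inequality $Q_{N,\star}$ from the proof of Lemma~\ref{lem:radius-identity} stay tight at every horizon. The paper's route is shorter and carries no extra invariant. It is also the FGM case of the general recursion \eqref{eq:abstract-radius-recursion}, which is what lets Proposition~\ref{prop:conditional-propagation} apply to schemes without an FGM-type radius identity.

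A few small clean-ups:
\begin{itemize}
\item Remove the stray ``wait'' display.
\item Replace ``$t_{N+1}\gtrsim3$'' by the explicit $t_{N+1}\geq t_8>3$, together with the strict bound $t_{N+2}<t_{N+1}+1$.
\item For the dimension count, note that $a>0$ (from $s_N<1$) gives $v_j=a^{-1}(v_j'+s_Nv_{m_N}')$. So the new span contains the old one, and only then does independence of $e$ give an increase of exactly one.
\end{itemize}
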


\begin{proof}
The base case is Lemma~\ref{lem:seed}.  The rational-interval certificate gives the rigorous lower bound $C_7-t_8>4.789612460337$, hence $s_7<1$.  If $C_N>t_{N+1}$, then~\eqref{eq:c-recursion} gives
\[
C_{N+1}=\left(1-\frac1{t_{N+1}}\right)C_N+t_{N+1}-1
>2t_{N+1}-2>t_{N+2},
\]
where the final inequality uses $t_{N+1}>3$ and $t_{N+2}<t_{N+1}+1$.  Thus $s_N\in(0,1)$ for all $N\geq7$.

Let $v_j'$ be the lift~\eqref{eq:lift-vectors}.  By~\eqref{eq:c-recursion},
\begin{align*}
\sum_{j=0}^{m_N-1}\left(1-\frac1{t_{N+1}}\right)(c_N)_jv_j'
 +(t_{N+1}-1)v_{m_N}'
&=\left(1-\frac1{t_{N+1}}\right)a\sum_j(c_N)_jv_j\\
&\quad+\left[\left(1-\frac1{t_{N+1}}\right)s_NC_N-(t_{N+1}-1)\right]e\\
&=0.
\end{align*}
This proves~\eqref{eq:GN-null} at the next horizon.

For an old query $i\leq N$, identity~\eqref{eq:old-coefficient-extension} says that its new coefficient vector is $(d_{i,N},t_{N+1})$.  Since the old directions are tilted to $av_j+s_Ne$ and the new direction is $-e$, its new-horizon residual is therefore
\begin{equation}\label{eq:old-residual-transform}
 z_{i,N+1}=az_{i,N}+\left(s_N\one^\top d_{i,N}-t_{N+1}\right)e
 =az_{i,N}+s_N\left(\one^\top d_{i,N}-C_N\right)e.
\end{equation}
Lemma~\ref{lem:coefficient-identities} and Lemma~\ref{lem:spherical-lift} therefore prove all old-query inequalities against all old and new vertices.  The old terminal residual remains zero, and~\eqref{eq:GN-null} makes the new terminal residual zero.  Hence~\eqref{eq:GN-proj-old}--\eqref{eq:GN-proj-zero} propagate.

It remains to verify the radius.  Under the lift,
\[
u_{N+1}:=\sum_{k=0}^{N+1}t_kv_{\iota(k)}'
=au_N+(s_NT_N-t_{N+1})e.
\]
By~\eqref{eq:C-closed},
\[
s_N=\frac{2T_Nt_{N+1}}{T_N^2-S_N}.
\]
Therefore
\begin{align*}
\norm{u_{N+1}}^2
&=(1-s_N^2)S_N+(s_NT_N-t_{N+1})^2\\
&=S_N+t_{N+1}^2+s_N\bigl[s_N(T_N^2-S_N)-2T_Nt_{N+1}\bigr]\\
&=S_{N+1},
\end{align*}
which proves~\eqref{eq:GN-radius}.  Finally, since $a=\sqrt{1-s_N^2}>0$ and $v_{m_N}'=-e$,
\[
 v_j=a^{-1}(v_j'+s_Nv_{m_N}')\qquad(0\leq j<m_N).
\]
Hence
$\operatorname{span}\{v_0',\ldots,v_{m_N}'\}
=\operatorname{span}\{v_0,\ldots,v_{m_N-1},e\}$.
The vector $e$ is orthogonal to the old span, so its dimension, and therefore the Gram rank, increases from $N-4$ to $N-3$.
\end{proof}

We can now state the main result.

\begin{theorem}[Exact best queried-gradient rate of FGM]\label{thm:main}
Let $N\geq7$, $L>0$, $R\geq0$, and $d\geq N-4$.  For the iterates~\eqref{eq:fgm-init}--\eqref{eq:fgm-x},
\begin{equation}\label{eq:main-result}
 W_N(L,R,d)=\frac{L^2R^2}{S_N},
 \qquad S_N=\sum_{k=0}^N t_k^2.
\end{equation}
The lower bound is attained by a smooth convex piecewise-quadratic function in dimension $N-4$.
\end{theorem}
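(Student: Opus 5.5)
The upper bound $W_N(L,R,d)\leq L^2R^2/S_N$ is Theorem~\ref{thm:upper}, so only the matching lower bound needs work. First I reduce to $L=R=1$ by scaling: if $f$ is a normalized instance, then $\tilde f(x)=LR^2 f(x/R)$ lies in $\F_{0,L}$. Its FGM iterates are $R$ times the normalized ones, its minimizer is $Rx_\star$, and its gradients are $LR$ times the normalized ones. The case $R=0$ is trivial because both sides vanish. For $d\geq N-4$, I embed the construction isometrically into $\R^d$; the function then depends only on the projection onto an $(N-4)$-dimensional subspace, and FGM iterates started in that subspace stay in it.

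For the construction I use Lemma~\ref{lem:well-defined}. It supplies unit vectors $v_0,\ldots,v_{m_N-1}$ with Gram matrix $G_N$, spanning a space of dimension $N-4$. I scale them to $g$-vectors by setting $\hat v_j:=S_N^{-1/2}v_j$, put $g_k:=\hat v_{\iota(k)}$, and take $x_0:=\sum_k t_k g_k=\sum_j(w_N)_j\hat v_j$. Then $\norm{x_0}^2=w_N^\top G_Nw_N/S_N=1$ by~\eqref{eq:GN-radius}. Set $K_N=\conv\{0,\hat v_0,\ldots,\hat v_{m_N-1}\}$ and $f=\phi_{K_N}$ as in Lemma~\ref{lem:projection-interpolant}, with $x_\star=0$, which is a minimizer. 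Now run FGM on $f$. The claim is that at every step $\nabla f(x_k)=g_k$, which I prove by induction on $k$ using~\eqref{eq:x-coeff}. Suppose $\nabla f(x_j)=g_j$ for $j<k$. Then $x_k=x_0+\sum_j(p_k)_jg_j$ has reduced coefficients $a_{k,N}$ in the basis $\hat v$. So the residual $x_k-g_k$ has coefficients $d_{k,N}$, and it equals $S_N^{-1/2}z_k$, where $z_k=\sum_j(d_{k,N})_jv_j$.

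By the vertex reduction in Lemma~\ref{lem:projection-interpolant}, $g_k=\Proj_{K_N}(x_k)$ holds exactly when $\ip{x_k-g_k}{g_k-g}\geq0$ for $g\in\{0,\hat v_0,\ldots\}$. After multiplying by the positive factor $S_N$, these conditions are precisely~\eqref{eq:GN-proj-old} and~\eqref{eq:GN-proj-zero}. Hence $\nabla f(x_k)=g_k$, which closes the induction. It follows that $\norm{\nabla f(x_k)}^2=S_N^{-1}$ for every $k$, so the minimum over $k$ equals $1/S_N$, and this matches the upper bound. The function is piecewise quadratic because $\phi_K=\tfrac12\norm{x}^2-\tfrac12\dist(x,K)^2$ for a polytope $K$.

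The one point I need to check carefully is the indexing subtlety: the inequalities in Lemma~\ref{lem:well-defined} must be exactly the normal-cone tests for every $i\leq N$ against every vertex. The induction has to respect the dependence of $x_k$ on earlier gradients only; this holds because $p_k$ is supported on $e_0,\ldots,e_{k-1}$. The heavy lifting already sits in Lemma~\ref{lem:well-defined}, so the main obstacle is only this bookkeeping of merged indices $\iota$ and the normalization.
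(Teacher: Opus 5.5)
Your proposal is correct and follows essentially the same route as the paper: realize $G_N$ from Lemma~\ref{lem:well-defined} and scale the directions by $S_N^{-1/2}$. Take the projection envelope of $K_N=\conv\{0,g_0,\ldots,g_N\}$ with $x_\star=0$, identify \eqref{eq:GN-proj-old}--\eqref{eq:GN-proj-zero} with the vertex normal-cone tests so the formal points are the actual FGM iterates by induction, then rescale to general $L,R$ and embed into $\R^d$. Your observation that $p_k$ is supported on $e_0,\ldots,e_{k-1}$ simply spells out the induction that the paper states in one line.
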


\begin{proof}
Theorem~\ref{thm:upper} supplies the upper bound.  If $R=0$, then $x_0=x_\star$, every queried gradient vanishes, and both sides of~\eqref{eq:main-result} are zero.  Hence assume $R>0$.  It remains to construct a matching instance.  First take $L=R=1$.  Realize $G_N$ by unit vectors $v_0,\ldots,v_{m_N-1}\in\R^{N-4}$, and set
\[
 q_N:=S_N^{-1/2},\qquad
 g_k:=q_Nv_{\iota(k)},\qquad
 K_N:=\conv\{0,g_0,\ldots,g_N\}.
\]
Define
\begin{equation}\label{eq:explicit-worst-function}
 f_N(x):=\max_{g\in K_N}\left\{\ip{x}{g}-\frac12\norm{g}^2\right\}
\end{equation}
and take $x_\star=0$ and $x_0=\sum_{k=0}^Nt_kg_k$.  By~\eqref{eq:GN-radius}, $\norm{x_0}=1$.

Generate formal points $x_i$ from~\eqref{eq:x-coeff} using the prescribed $g_i$.  Equations~\eqref{eq:GN-proj-old}--\eqref{eq:GN-proj-zero}, after multiplication by $q_N^2$, are exactly
\[
\ip{x_i-g_i}{g_i-g}\geq0\qquad(g\in\{0,g_0,\ldots,g_N\}).
\]
Lemma~\ref{lem:projection-interpolant} gives $\nabla f_N(x_i)=g_i$.  Hence the formal points are the actual FGM iterates by induction, and
\[
\min_{0\leq i\leq N}\norm{\nabla f_N(x_i)}^2
=\norm{g_i}^2=\frac1{S_N}.
\]

For general $L$ and $R$, set
\[
 f_{N,L,R}(x)=LR^2 f_N\left(\frac{x-x_\star}{R}\right).
\]
Then $f_{N,L,R}\in\F_{0,L}$, the FGM trajectory is the corresponding scaled trajectory, and every queried gradient has squared norm $L^2R^2/S_N$.  Embed the construction isometrically when $d>N-4$.
\end{proof}

\begin{remark}[The next post-gradient point hits the minimizer]\label{rem:post-gradient-hit}
Let $V_Na:=\sum_j a_jv_j$, so $G_N=V_N^\top V_N$.  Since $G_Nc_N=0$,
$\norm{V_Nc_N}^2=c_N^\top G_Nc_N=0$.  But $c_N=d_{N,N}$ is the reduced coefficient vector of the normalized terminal post-gradient residual; therefore
\[
 y_{N+1}=x_N-\nabla f_N(x_N)=q_NV_Nc_N=0=x_\star.
\]
After scaling, the same identity is
$y_{N+1}=x_N-L^{-1}\nabla f_{N,L,R}(x_N)=x_\star$.  Hence this worst-case instance has
$\min_{0\leq k\leq N+1}\norm{\nabla f(y_k)}=0$, even though it attains the queried-point bound.  Thus it cannot serve as a matching lower-bound instance for the post-gradient criterion.  This explains why the full Kim--Fessler chain~\eqref{eq:KF-upper-intro} contains two genuinely different performance questions and why Theorem~\ref{thm:main} is stated only for $x_0,\ldots,x_N$; it does not by itself determine the post-gradient worst-case value.
\end{remark}

\begin{remark}[Explicitness]
Formula~\eqref{eq:explicit-worst-function}, the six algebraic equations defining $G_7$, and the scalar recursion~\eqref{eq:gram-lift}--\eqref{eq:sN} specify the adversary without solving an SDP.  Coordinates can be obtained from any rank-revealing Cholesky or eigenvalue factorization of $G_N$.
\end{remark}

\begin{remark}[Why the construction is high-dimensional]
The seed has rank three and each lift adds one dimension, so the displayed adversary has dimension $N-4$.  This matches the stable rank observed in high-accuracy primal PEP solutions.  We have neither an analytical uniqueness result for the primal optimum nor a proof that $N-4$ is a dimension lower bound for all matching instances; the rank of one numerical solution does not establish either claim.
\end{remark}

\section{Numerical PEP audit and finite-horizon exceptions}

Numerical exact-interpolation PEP evidence for this performance measure predates the present work: Kim and Fessler reported tight numerical values at selected horizons in Section~6.2 and Table~3 of~\cite{KimFessler2018}.  The computations below are a reproducibility audit, add the transition horizons $N=5,6,7,8$, and report signed differences and primal--dual gaps; they are not offered as the first numerical evidence for equality.

Table~\ref{tab:numerics} compares dimension-unrestricted exact-interpolation PEP computations with the analytical upper bound for $L=R=1$; equivalently, the Gram dimension is allowed to be large enough for the finite interpolation data.  Here $\widehat W_N$ is the numerical reconstructed dual objective returned by PEPit/Clarabel and $\Delta_N:=\widehat W_N-1/S_N$.  The largest displayed primal--dual gap is $3.4\times10^{-9}$.  Neither the numerical objective nor the reported gap is a rigorous enclosure or certificate, and signs at or below this numerical scale have no mathematical significance.  Equality for $N\geq7$ follows from Theorem~\ref{thm:main}, not from the decimal outputs; the values for $2\leq N\leq6$ remain numerical.

\begin{table}[ht]
\centering
\caption{Exact-interpolation queried-point PEP computations for $L=R=1$.}
\label{tab:numerics}
\begin{tabular}{rcccc}
\toprule
$N$ & $\widehat W_N$ & $1/S_N$ & $\Delta_N$ & primal--dual gap \\
\midrule
1 & 0.2500000002 & 0.2763932023 & $-2.64\times10^{-2}$ & $4.7\times10^{-10}$\\
2 & 0.0928513212 & 0.1186296604 & $-2.58\times10^{-2}$ & $3.4\times10^{-9}$\\
3 & 0.0427220648 & 0.0625353814 & $-1.98\times10^{-2}$ & $1.2\times10^{-9}$\\
4 & 0.0291845018 & 0.0372478605 & $-8.06\times10^{-3}$ & $2.3\times10^{-10}$\\
5 & 0.0234677658 & 0.0240754932 & $-6.08\times10^{-4}$ & $2.8\times10^{-9}$\\
6 & 0.0164895982 & 0.0165043940 & $-1.48\times10^{-5}$ & $9.0\times10^{-10}$\\
7 & 0.0118290763 & 0.0118290756 & $+6.7\times10^{-10}$ & $7.4\times10^{-10}$\\
8 & 0.0087793168 & 0.0087793158 & $+9.9\times10^{-10}$ & $8.2\times10^{-10}$\\
10& 0.0052360222 & 0.0052360213 & $+8.9\times10^{-10}$ & $6.0\times10^{-10}$\\
15& 0.0019420477 & 0.0019420474 & $+3.0\times10^{-10}$ & $1.5\times10^{-10}$\\
\bottomrule
\end{tabular}
\end{table}

The first exceptional horizon admits a short exact description.

\begin{proposition}[The first horizon]\label{prop:N1}
For every $L>0$, $R\geq0$, and $d\geq1$,
\[
 W_1(L,R,d)=\frac{L^2R^2}{4}.
\]
\end{proposition}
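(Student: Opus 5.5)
The formula is a statement about one gradient step. Since $t_0=1$, \eqref{eq:fgm-x} gives $x_1=y_1=x_0-L^{-1}\nabla f(x_0)$. So $W_1$ is the worst case of $\min\{\norm{g_0}^2,\norm{g_1}^2\}$ after one step of gradient descent with stepsize $1/L$. I will prove the two inequalities separately, after the same scaling reduction to $L=R=1$ and $x_\star=0$ used in Theorem~\ref{thm:main}. The case $R=0$ is trivial as before.

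\emph{Lower bound.} I will use Lemma~\ref{lem:projection-interpolant} in dimension one with $K=[0,\tfrac12]$, so that $\phi_K$ is the one-sided Huber function, and take $x_0=1$. Then $g_0=\Proj_K(1)=\tfrac12$ and $x_1=\tfrac12$, so $g_1=\Proj_K(\tfrac12)=\tfrac12$. Both queried gradients have squared norm $\tfrac14$, and $x_\star=0$ is a minimizer with $\norm{x_0-x_\star}=1$. The condition \eqref{eq:projection-condition} is immediate in one dimension. Embedding isometrically gives the construction for every $d\geq1$.

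\emph{Upper bound.} The Kim--Fessler value $1/S_1\approx0.276$ is too weak here, so a new dual certificate is needed. Set $f_\star=0$ and write $Q_{ij}\geq0$ for the smooth-convex interpolation inequalities among the points $\star,0,1$, with $x_1=x_0-g_0$. Using $m\leq\tau\norm{g_0}^2+(1-\tau)\norm{g_1}^2$ for the performance term, I will look for $\tau\in[0,1]$ and multipliers $\lambda_{ij}\geq0$ with two properties. First, the function-value terms must cancel. Second, the resulting inequality must take the form
\[
 \tau\norm{g_0}^2+(1-\tau)\norm{g_1}^2-\tfrac14\norm{x_0}^2
 \leq -\sum\lambda_{ij}Q_{ij}-\norm{\alpha x_0+\beta g_0+\gamma g_1}^2\leq0 .
\]
Complementary slackness at the Huber instance guides the choice of active constraints. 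That instance has $g_0=g_1=\tfrac12 x_0$, and $x_1=\tfrac12$ lies on the kink of $\phi_K$. So I will use only inequalities that are tight there: $Q_{\star,0}$, $Q_{\star,1}$, $Q_{0,1}$, $Q_{1,0}$, and possibly $Q_{1,\star}$ (with $f_1=\tfrac18$, $g_1=\tfrac12$, $x_1=\tfrac12$, one checks $Q_{1,\star}=0$). The square must vanish at $(x_0,g_0,g_1)=(1,\tfrac12,\tfrac12)$. This forces the residual vector to be a multiple of, for example, $x_0-g_0-g_1$. The identity then reduces to matching the six coefficients of the $3\times3$ Gram quadratic form, which is a small linear system in $(\tau,\lambda_{ij},\text{scale})$.

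The main obstacle is finding nonnegative multipliers that solve this system. A quick trial shows why care is needed: summing $Q_{\star,0}$, $Q_{0,1}$ and $Q_{1,\star}$ with unit weights only yields $m\leq4/9$. The reason is that these rows alone do not make the function values cancel with a useful sign. The lower-bound terms $f_i\geq\tfrac12\norm{g_i}^2$, i.e.\ $Q_{i,\star}$, must therefore enter with positive weight. I will first try the weights suggested by the equality case: $\tau=\tfrac12$, equal weights on the two $\star$-rows in each direction, and a single free weight on $Q_{1,0}$. If this two-parameter ansatz has no nonnegative solution, I will solve the full $5$-multiplier linear system exactly. The numerical value $0.25$ in Table~\ref{tab:numerics} indicates that such a rational certificate exists.
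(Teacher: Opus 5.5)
Your lower-bound construction is correct and is the paper's: the segment $K=[0,\tfrac12]$ with $x_0=1$, giving $g_0=g_1=\tfrac12$. The gap is the upper bound, which is the only nontrivial half of the proposition. You never exhibit a certificate, and appealing to the numerical value $0.25$ is evidence, not proof.

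Your primary ansatz also cannot work. Any identity of your displayed form implies $\tau\norm{g_0}^2+(1-\tau)\norm{g_1}^2\leq\tfrac14\norm{x_0}^2$ on every instance. Take $f=\tfrac12\norm{x}^2$ with $x_0=e$: then $g_0=e$, $x_1=0$, $g_1=0$, which forces $\tau\leq\tfrac14$. The equality instance cannot select $\tau=\tfrac12$, because there $\norm{g_0}=\norm{g_1}$, so every $\tau$ is tight.

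The missing step is a single line, and your ``quick trial'' already contains it; you miscomputed it. Set $x_\star=g_\star=0$, $f_\star=0$, $x_1=x_0-g_0$ and $Q_{ij}=f_i-f_j-\ip{g_j}{x_i-x_j}-\tfrac12\norm{g_i-g_j}^2$. The function values cancel identically in the cyclic sum, and
\[
 Q_{\star0}+Q_{01}+Q_{1\star}=\ip{g_0}{x_0}-\norm{g_0}^2-\norm{g_1}^2,
\]
hence
\[
 \norm{g_1}^2=\tfrac14\norm{x_0}^2-\norm{g_0-\tfrac12x_0}^2-\bigl(Q_{\star0}+Q_{01}+Q_{1\star}\bigr)\leq\tfrac14\norm{x_0}^2 .
\]
With $\norm{x_0}\leq1$, this gives $m\leq\norm{g_1}^2\leq\tfrac14$. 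It is a $\tau=0$ certificate with the sharp constant. The value $4/9$ would arise only if the coefficient of $\norm{g_0}^2$ were $\tfrac12$.

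The paper uses another $\tau=0$ certificate. Cocoercivity between $x_0,x_1$ gives $\ip{g_0}{g_1}\geq\norm{g_1}^2$, and cocoercivity between $x_1,x_\star$ gives $\ip{g_1}{x_1}\geq\norm{g_1}^2$. Their sum is $\ip{g_1}{x_0}\geq2\norm{g_1}^2$, and Cauchy--Schwarz finishes. In your language this is $Q_{01}+Q_{10}+Q_{1\star}+Q_{\star1}$ with residual $\tfrac12x_0-g_1$.

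Your residual $x_0-g_0-g_1$ does appear in a valid certificate, but only at $\tau=\tfrac14$:
\[
 \tfrac14\norm{x_0}^2-\tfrac14\norm{g_0}^2-\tfrac34\norm{g_1}^2
 =\tfrac12\bigl(Q_{1\star}+Q_{\star1}\bigr)+\tfrac12\bigl(Q_{\star0}+Q_{01}+Q_{1\star}\bigr)+\tfrac14\norm{x_0-g_0-g_1}^2 .
\]
The idea you needed is simply that bounding $\norm{g_1}$ alone suffices.
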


\begin{proof}
The case $R=0$ is immediate.  Let $g_i=\nabla f(x_i)$ and
$x_1=x_0-L^{-1}g_0$.  Cocoercivity between $x_0$ and $x_1$ gives
$\ip{g_1}{g_0}\geq\norm{g_1}^2$, while cocoercivity between $x_1$ and a
minimizer gives
$\ip{g_1}{x_1-x_\star}\geq L^{-1}\norm{g_1}^2$.  Therefore
\[
 \ip{g_1}{x_0-x_\star}
 =\ip{g_1}{x_1-x_\star}+L^{-1}\ip{g_1}{g_0}
 \geq 2L^{-1}\norm{g_1}^2.
\]
Cauchy--Schwarz yields $\norm{g_1}\leq LR/2$, and hence the same upper
bound for the minimum of the first two gradient norms.  For equality, fix a
unit vector $e$, let $K=[0,(LR/2)e]$, and define
\[
 f(x)=\max_{g\in K}\left\{\ip{x}{g}-\frac{1}{2L}\norm{g}^2\right\}.
\]
With $x_\star=0$ and $x_0=Re$, the scaled form of
Lemma~\ref{lem:projection-interpolant} gives
$f\in\F_{0,L}(\R^d)$ and $\nabla f(x)=\Proj_K(Lx)$; hence
$\nabla f(Re)=(LR/2)e$, $x_1=(R/2)e$, and
$\nabla f(x_1)=(LR/2)e$.  Since $0\in K$, one also has $f(x)\geq0$
for every $x$ and $f(0)=0$, so $x_\star=0$ is indeed a minimizer.
\end{proof}

The horizons $2\leq N\leq6$ do not form a proved continuation of either
Proposition~\ref{prop:N1} or the lifted $N\geq7$ branch.  Within the
five-prefix ansatz, the nominal seed system is overdetermined before $N=7$;
this explains why that completion is generically overdetermined but does not exclude different
Gram structures.  We claim no closed forms, optimizer uniqueness, or rank
pattern for those five numerical optima.

The disagreement for small $N$ is useful methodologically.  The Kim--Fessler SDP is an outer relaxation, so its analytical dual solution alone does not imply equality.  Conversely, the tight numerical values already reported by Kim and Fessler, and reproduced here, do not give a symbolic all-horizon proof.  The new contribution is therefore not the numerical coincidence, but its realization by an explicit analytic family for every $N\geq7$.

For completeness, define the distinct post-gradient performance
\[
 Y_N(L,R,d):=\sup_{\substack{f\in\F_{0,L}(\R^d),\ x_\star\in\arg\min f\\
                     \norm{x_0-x_\star}\leq R}}
       \min_{0\leq k\leq N+1}\norm{\nabla f(y_k)}^2
\]
under the same smoothness and radius constraints.  Table~\ref{tab:y-numerics}
reports a separate, dimension-unrestricted exact-interpolation PEP audit (the
Gram dimension is not fixed).  The numerical values lie well below $1/S_N$,
but no analytical formula is claimed; neither the numerical dual objectives
nor the reported gaps are rigorous enclosures.

\begin{table}[ht]
\centering
\caption{Dimension-unrestricted post-gradient PEP computations for $L=R=1$; $\widehat Y_N$ is the numerical reconstructed dual objective.}
\label{tab:y-numerics}
\begin{tabular}{rccc}
\toprule
$N$ & $\widehat Y_N$ & $1/S_N$ & primal--dual gap\\
\midrule
1 & 0.1111111112 & 0.2763932023 & $1.4\times10^{-10}$\\
2 & 0.0545452170 & 0.1186296604 & $2.6\times10^{-10}$\\
3 & 0.0293398935 & 0.0625353814 & $1.8\times10^{-9}$\\
4 & 0.0170223951 & 0.0372478605 & $2.7\times10^{-10}$\\
5 & 0.0105015504 & 0.0240754932 & $3.3\times10^{-11}$\\
6 & 0.0068110769 & 0.0165043940 & $3.5\times10^{-11}$\\
7 & 0.0046033545 & 0.0118290756 & $5.5\times10^{-11}$\\
8 & 0.0036255863 & 0.0087793158 & $2.9\times10^{-10}$\\
10& 0.0026903503 & 0.0052360213 & $2.3\times10^{-10}$\\
15& 0.0012524817 & 0.0019420474 & $3.2\times10^{-10}$\\
\bottomrule
\end{tabular}
\end{table}

\section{A conditional propagation theorem}

A rank-one relaxed-PEP slack is a useful discovery signal, but it is not by
itself sufficient for an exact interpolation or an all-horizon construction.
Nor is recursion across horizons new in itself: resisting-oracle arguments
grow hidden orthogonal subspaces, and recursive PEP-style certificate gluing
appears in~\cite{AltschulerParrilo2025}.  The narrower reusable statement
established by our argument is the following conditional primal theorem,
whose hypotheses can be checked directly from an algorithm's coefficient
arrays.

\begin{proposition}[Conditional propagation of spherical adversaries]
\label{prop:conditional-propagation}
Consider a homogeneous fixed-step first-order scheme in the normalized case
$L=1$, where homogeneous means that a common positive scaling of all points
and prescribed oracle vectors preserves the scheme's linear trajectory
equations.  The horizon extension considered here retains the old queries and
adds exactly one new terminal query, labelled by one new oracle direction.  At
horizon $n$, suppose its formal trajectory under prescribed unit
oracle vectors $v_0,\ldots,v_{m-1}$ has the representation below, where every
query label satisfies $\ell(i)\in\{0,\ldots,m-1\}$:
\[
 x_{0,n}=\sum_{j=0}^{m-1}(w_n)_jv_j,
 \qquad
 x_{i,n}=v_{\ell(i)}+z_{i,n},
 \qquad
 z_{i,n}=\sum_{j=0}^{m-1}(d_{i,n})_jv_j.
\]
Set $c_n=d_{n,n}$, $C_n=\one^\top c_n$,
$D_{i,n}=\one^\top d_{i,n}$, and $A_n=\one^\top w_n$.  Assume:
\begin{enumerate}[label=\textup{(\roman*)},leftmargin=2.5em]
\item for every old query index $i\leq n$ and every old vertex index $j<m$,
the corresponding residual satisfies
$\ip{z_{i,n}}{v_{\ell(i)}-v_j}\geq0$ and
$\ip{z_{i,n}}{v_{\ell(i)}}\geq0$;
\item $z_{n,n}=0$, $C_n>0$, and $D_{i,n}\geq C_n$ for every $i\leq n$;
\item for some $\theta_n\in(0,C_n)$ and $\rho_n>0$, the next coefficient table is
\[
 w_{n+1}=(w_n,\theta_n),\qquad
 d_{i,n+1}=(d_{i,n},\theta_n)\ (i\leq n),\qquad
 d_{n+1,n+1}=\rho_n(c_n,\theta_n),
\]
with the old labels unchanged and the new query labelled by the new
direction.
\end{enumerate}
Let $s_n=\theta_n/C_n$, $a_n=\sqrt{1-s_n^2}$, choose a unit
$e\perp\operatorname{span}\{v_j\}$, and set
\[
 v_j^+=a_nv_j+s_ne\quad(j<m),
 \qquad v_m^+=-e.
\]
Then $v_0^+,\ldots,v_m^+$ are unit vectors, and all horizon-$(n+1)$ residuals satisfy the projection inequalities
against $0,v_0^+,\ldots,v_m^+$, and the new terminal residual is zero.
Moreover, if $\ker G_n=\operatorname{span}\{c_n\}$, the new Gram matrix is
\[
 G_{n+1}=
 \begin{pmatrix}
 (1-s_n^2)G_n+s_n^2\one\one^\top&-s_n\one\\
 -s_n\one^\top&1
 \end{pmatrix},
\]
with $\ker G_{n+1}=\operatorname{span}\{d_{n+1,n+1}\}$ and
$\operatorname{rank}G_{n+1}=\operatorname{rank}G_n+1$.  Finally, writing
$\mathcal{R}_n=\norm{x_{0,n}}^2$, the radius obeys
\begin{equation}\label{eq:abstract-radius-recursion}
 \mathcal{R}_{n+1}=(1-s_n^2)\mathcal{R}_n+(s_nA_n-\theta_n)^2.
\end{equation}
Define
$K_{n+1}:=\conv\{0,v_0^+,\ldots,v_m^+\}$.  The projection interpolant of
Lemma~\ref{lem:projection-interpolant} therefore turns the lifted data into a
global smooth convex instance.  When $\mathcal{R}_{n+1}>0$, homogeneous scaling to unit
initial radius with $\alpha=\mathcal{R}_{n+1}^{-1/2}$, $\widetilde K_{n+1}=\alpha
K_{n+1}$, and $\widetilde x_{i,n+1}=\alpha x_{i,n+1}$ gives
\[
 \min_{0\leq i\leq n+1}
 \norm{\nabla\phi_{\widetilde K_{n+1}}(\widetilde x_{i,n+1})}^2
 =\frac1{\mathcal{R}_{n+1}}.
\]
Consequently, an analytical upper bound equal to $1/\mathcal{R}_{n+1}$ is exact when it
concerns this same queried-gradient performance measure, method, function
class, radius normalization, and an ambient dimension admitting the displayed
realization.
\end{proposition}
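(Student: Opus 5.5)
We reduce almost everything to Lemma~\ref{lem:spherical-lift}, replaying the computation in the proof of Lemma~\ref{lem:well-defined} with the FGM-specific constants $t_{N+1}$, $(1-t_{N+1}^{-1})$ replaced by the abstract $\theta_n$, $\rho_n$. First, since $0<\theta_n<C_n$, we have $s_n\in(0,1)$ and $a_n>0$. The $v_j^+$ are unit vectors because $e\perp v_j$, and $v_m^+=-e$ is unit.

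For an old query $i\leq n$, hypothesis (iii) gives the new coefficient vector $(d_{i,n},\theta_n)$. Hence
\[
 z_{i,n+1}=\sum_j (d_{i,n})_j(a_nv_j+s_ne)-\theta_ne
 =a_nz_{i,n}+(s_nD_{i,n}-\theta_n)e .
\]
This is exactly~\eqref{eq:lift-residual} with $\theta=s_nC_n=\theta_n$ and $C=C_n$. Hypotheses (i)--(ii) are precisely those of Lemma~\ref{lem:spherical-lift}, with ``last'' $=n$. That lemma therefore yields all old-query inequalities against $0,v_0^+,\ldots,v_m^+$, including the inequalities against the new apex.

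For the new terminal query the computation is
\[
 z_{n+1,n+1}=\rho_n\Bigl(a_n\sum_j(c_n)_jv_j+\bigl(s_nC_n-\theta_n\bigr)e\Bigr)=\rho_n a_n z_{n,n}=0 .
\]
A zero residual satisfies every projection inequality trivially.

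For the Gram statement, \eqref{eq:gram-lift} is just the inner-product table of the $v^+$. To get the kernel, write $V^+y=a_nVy'+(s_n\one^\top y'-y_m)e$ with $y=(y',y_m)$. Since $e\perp\operatorname{range}V$, $V^+y=0$ holds iff $Vy'=0$ and $y_m=s_n\one^\top y'$. The first condition forces $y'=\lambda c_n$, and then $y_m=\lambda s_nC_n=\lambda\theta_n$. Thus $\ker G_{n+1}=\operatorname{span}\{(c_n,\theta_n)\}=\operatorname{span}\{d_{n+1,n+1}\}$, using $\rho_n>0$. Rank--nullity then gives the rank increase: the size grows by one while the nullity stays one. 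Alternatively, as in Lemma~\ref{lem:well-defined}, the span gains $e$.

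The radius recursion follows from $x_{0,n+1}=a_nx_{0,n}+(s_nA_n-\theta_n)e$, which comes from $w_{n+1}=(w_n,\theta_n)$ by the same expansion, and then from orthogonality of $e$ to $x_{0,n}$.

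The remaining step is the global instance, which is where the argument must be careful. We realize the lifted data in $\operatorname{span}\{v_j,e\}$ and apply Lemma~\ref{lem:projection-interpolant} with $K_{n+1}$. Vertex checking suffices there, so the inequalities just proved give $\nabla\phi_{K_{n+1}}(x_{i,n+1})=v^+_{\ell(i)}$ at the formal points. The formal points coincide with the actual trajectory by induction on $i$. This uses that the scheme's trajectory equations are linear in the oracle outputs and that each prescribed output equals the true gradient at the previously generated point; $x_\star=0$ is a minimizer since $0\in K$.

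Scaling by $\alpha=\mathcal{R}_{n+1}^{-1/2}$ requires care. Because $\Proj_{\alpha K}(\alpha x)=\alpha\Proj_K(x)$, the gradients scale by $\alpha$. Homogeneity (the scheme's equations are preserved under common positive scaling) keeps $\widetilde x_{i,n+1}$ an actual trajectory of the scheme with unit initial radius. Every queried gradient then has squared norm $\alpha^2=1/\mathcal{R}_{n+1}$, which is the displayed minimum. Matching an analytical upper bound $1/\mathcal{R}_{n+1}$ then gives exactness in the stated setting.
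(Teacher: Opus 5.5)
Your proposal is correct and follows essentially the same route as the paper's proof. You reduce the old-query residuals to Lemma~\ref{lem:spherical-lift} via $z_{i,n+1}=a_nz_{i,n}+s_n(D_{i,n}-C_n)e$, compute the new terminal residual directly, obtain the kernel by orthogonal decomposition along $e$ and the radius by orthogonality, and conclude with the projection interpolant and homogeneous scaling; your remarks that $\rho_n>0$ identifies $\operatorname{span}\{(c_n,\theta_n)\}$ with $\operatorname{span}\{d_{n+1,n+1}\}$ and that rank--nullity gives the rank increase only make explicit steps the paper leaves implicit.
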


\begin{proof}
For an old query, the one-coordinate extension gives
\[
 z_{i,n+1}
 =a_nz_{i,n}+(s_nD_{i,n}-\theta_n)e
 =a_nz_{i,n}+s_n(D_{i,n}-C_n)e.
\]
This is exactly the residual in Lemma~\ref{lem:spherical-lift}; its three
projection inequalities follow from assumptions (i)--(ii).  The new terminal
residual equals
\[
 \rho_n\left(
 a_n\sum_j(c_n)_jv_j+(s_nC_n-\theta_n)e
 \right)=0,
\]
so its inequalities hold with equality.

The displayed $G_{n+1}$ is the Gram matrix of the lifted directions, hence is
positive semidefinite.  If $\sum_{j<m}\xi_jv_j^++\eta v_m^+=0$, orthogonal
decomposition gives $\sum_j\xi_jv_j=0$ and
$s_n\one^\top\xi-\eta=0$.  The one-dimensional old kernel implies
$\xi=\lambda c_n$ and $\eta=\lambda \theta_n$.  Thus the new kernel is spanned by
$(c_n,\theta_n)$, equivalently by $d_{n+1,n+1}$, and the rank increases by one.

Finally,
$x_{0,n+1}=a_nx_{0,n}+(s_nA_n-\theta_n)e$; orthogonality proves
\eqref{eq:abstract-radius-recursion}.  The already verified variational
inequalities identify every prescribed vertex as the Euclidean projection at
its formal query point.  The prescribed oracle vectors are therefore the
actual gradients of the global function from
Lemma~\ref{lem:projection-interpolant}; induction through the method equations
identifies the formal and actual trajectories.  If $\mathcal{R}_{n+1}>0$, set
$\alpha=\mathcal{R}_{n+1}^{-1/2}$ and replace the polytope, all points, and all oracle
vectors by their $\alpha$-scaled copies.  Projection equivariance and the
scheme's homogeneity preserve the interpolation and trajectory, while the
initial radius becomes one and every gradient norm becomes $\alpha$.
\end{proof}

FGM satisfies all hypotheses with
$\theta_N=t_{N+1}$, $\rho_N=1-t_{N+1}^{-1}$, and $A_N=T_N$.  Specifically,
\eqref{eq:old-coefficient-extension} is the old-row extension,
\eqref{eq:c-recursion} is the terminal extension, and
\eqref{eq:coefficient-monotonicity} gives the coefficient-sum condition.
The proof of Lemma~\ref{lem:well-defined} verifies $\theta_N=t_{N+1}<C_N$ and reduces
\eqref{eq:abstract-radius-recursion} to $S_{N+1}=S_N+t_{N+1}^2$.  Thus the
proposition packages the precise interface between the finite seed and the
all-horizon construction.  It does not assert that a rank-one dual slack
automatically supplies such a seed or coefficient extension.

\section{Reproducibility and limitations}

The supplementary material contains:
\begin{itemize}[leftmargin=2em]
\item pinned direct Python requirements and the exact-interpolation PEP script for the queried and post-gradient metrics;
\item \texttt{fgm\_base7\_rational\_certificate.py}, which rebuilds the algebraic $6\times6$ seed system using only \texttt{Fraction} endpoints and integer square roots, together with a high-precision secondary \texttt{mpmath.iv} replay;
\item \texttt{fgm\_recursive\_adversary.py}, which generates $G_N$ and checks Gram rank, radius, terminal balance, and every projection inequality (tested through $N=100$);
\end{itemize}

Two limitations should be explicit.  First, no analytical characterization is given for the five exceptional horizons $2\leq N\leq6$.  Second, Theorem~\ref{thm:main} concerns the minimum over queried points, not the terminal queried gradient or the post-gradient sequence.  Both variants exhibit different numerical branch changes and require separate analyses.

\section*{Use of AI assistance}

Parts of this work were developed with the assistance of \textsc{ChatGPT} (GPT-5.6 sol), a
proprietary large language model.

\ It was used to explore candidate seed structures at horizon seven, to
suggest the common-latitude lifting ansatz underlying
Lemma~\ref{lem:spherical-lift}, and to draft the interval-arithmetic code of
the supplementary certificates.

The author directed the investigation, verified every mathematical statement,
and takes full responsibility for the contents of this paper.  Two features of
the argument are relevant to that verification.  First, the horizon-seven seed
is certified by \texttt{fgm\_base7\_rational\_certificate.py}, which uses only
\texttt{Fraction} arithmetic and integer square roots, rounds every operation
outward, and refuses to divide by an interval that has not been certified to
exclude zero; its conclusions are therefore mathematical enclosures rather than
floating-point evidence, and they are independently reproducible in a fraction
of a second.  Second, the lifting argument of
Sections~\ref{sec:lift}--\ref{sec:main} is a symbolic proof: it is verified by
hand and does not depend on any numerical or machine-generated output.  No
claim in this paper rests on an unverified model output.

\section{Conclusion}

For standard FGM, a classical relaxed PEP upper bound is exactly attainable for every horizon $N\geq7$.  Using a standard projection-envelope function family, we construct an FGM-specific recursively lifted spherical polytope that gives a matching global instance.  This establishes the sharp finite-horizon constant $L^2R^2/S_N$, supplies an explicit worst case in dimension $N-4$, and upgrades previously reported numerical PEP equality to an all-horizon theorem.  Proposition~\ref{prop:conditional-propagation} isolates the precise additional hypotheses under which this particular primal lift propagates a fixed spherical seed; it does not claim novelty for Moreau-envelope adversaries, dimension extension, generic horizon recursion, or pyramid constructions, and a rank-one relaxed-PEP certificate alone is not sufficient.

\appendix
\section{Seed certificate data}

For convenience, the outward-rounded enclosures for the six off-diagonal entries of $G_7$, in the order $(G_{01},G_{02},G_{03},G_{12},G_{13},G_{23})$, are
\begin{align*}
G_{01}&\in[\phantom{-}0.3936432000209865,\ \phantom{-}0.3936432000209866],\\
G_{02}&\in[-0.1082450940075492,\ -0.1082450940075491],\\
G_{03}&\in[-0.5892518844114606,\ -0.5892518844114605],\\
G_{12}&\in[-0.0145831949772622,\ -0.0145831949772621],\\
G_{13}&\in[-0.6283114425647131,\ -0.6283114425647130],\\
G_{23}&\in[-0.6283114425647131,\ -0.6283114425647130].
\end{align*}
The equality $G_{13}=G_{23}$ follows from the exact linear system.  The supplementary rational-interval script recomputes these enclosures from the recurrence~\eqref{eq:t-rec}; it does not treat the decimal endpoints as input data.

\end{document}